\documentclass[12pt,reqno]{article}

\usepackage[usenames]{color}
\usepackage{amssymb}
\usepackage{amsmath}
\usepackage{amsthm}
\usepackage{amsfonts}
\usepackage{amscd}
\usepackage{graphicx}
\usepackage{caption}

\usepackage[colorlinks=true,
linkcolor=webgreen,
filecolor=webbrown,
citecolor=webgreen]{hyperref}

\definecolor{webgreen}{rgb}{0,.5,0}
\definecolor{webbrown}{rgb}{.6,0,0}

\usepackage{color}
\usepackage{fullpage}
\usepackage{float}

\usepackage{graphics}
\usepackage{latexsym}
\usepackage{epsf}

\setlength{\textwidth}{6.5in}
\setlength{\oddsidemargin}{.1in}
\setlength{\evensidemargin}{.1in}
\setlength{\topmargin}{-.1in}
\setlength{\textheight}{8.4in}

\begin{document}

\theoremstyle{plain}
\newtheorem{theorem}{Theorem}
\newtheorem{corollary}[theorem]{Corollary}
\newtheorem{lemma}[theorem]{Lemma}
\newtheorem{proposition}[theorem]{Proposition}

\theoremstyle{definition}
\newtheorem{definition}[theorem]{Definition}
\newtheorem{example}[theorem]{Example}
\newtheorem{conjecture}[theorem]{Conjecture}
\newtheorem{observation}[theorem]{Observation}

\theoremstyle{remark}
\newtheorem{remark}[theorem]{Remark}

\begin{center}
\vskip 1cm{\LARGE\bf
Square-Weighted Zero-Sum Constants}
\vskip 1cm
\large
Krishnendu Paul and Shameek Paul \\
School of Mathematical Sciences \\
Ramakrishna Mission Vivekananda Educational and Research Institute \\
P.O. Belur Math, Dist.\ Howrah, West Bengal 711202 \\
India \\

\href{mailto:krishnendu.p.math18@gm.rkmvu.ac.in}{\tt krishnendu.p.math18@gm.rkmvu.ac.in} \\
\href{mailto:shameek.rkmvu@gmail.com}{\tt shameek.rkmvu@gmail.com}
\end{center}

\vskip .2 in

\begin{abstract}
Let $A\subseteq \mathbb Z_n$ be a subset. A sequence $S=(x_1,\ldots,x_k)$ in $\mathbb Z_n$ is said to be an $A$-weighted zero-sum sequence if there exist $a_1,\ldots,a_k\in A$ such that $a_1x_1+\cdots+a_kx_k=0$. By a square, we shall mean a non-zero square in $\mathbb Z_n$. We determine the smallest natural number $k$, such that every sequence in $\mathbb Z_n$ whose length is $k$, has a square-weighted zero-sum subsequence. We also determine the smallest natural number $k$, such that every sequence in $\mathbb Z_n$ whose length is $k$, has a square-weighted zero-sum subsequence whose terms are consecutive terms of the given sequence.
\end{abstract}

\section{Introduction} 

For a finite set $A$, we let $|A|$ denote the number of elements of $A$. For $a,b\in\mathbb Z$, we let $[a,b]$ denote the set $\{x\in\mathbb Z:a\leq x\leq b\}$.

Let $R$ be a ring with unity, $M$ be an $R$-module, and $A\subseteq R$. A subsequence $T$ of a sequence $S=(x_1,x_2,\ldots, x_{k})$ in $M$ is called an {\it $A$-weighted zero-sum subsequence} if the set $J=\{i : x_i\in T\}$ is non-empty and for every $i\in J$ there exists $a_i\in A$ such that $\sum_{i\in J} a_i x_i = 0$.

For a finite $R$-module $M$ and $A \subseteq R$, the {\it $A$-weighted Davenport constant of $M$} denoted by $D_A$ is defined to be the least positive integer $k$, such that every sequence in $M$ whose length is $k$, has an $A$-weighted zero-sum subsequence.

Adhikari and Chen \cite{AC} introduced this constant for the ring $R=\mathbb Z$, i.e., for abelian groups. We define the constant $C_A$ to be the least positive integer $k$, such that every sequence in $M$ whose length is $k$, has an $A$-weighted zero-sum subsequence whose terms are consecutive terms.

\begin{remark}\label{c}
Mondal, K. Paul, and S. Paul \cite{SKS} have shown that $D_A\leq C_A\leq |M|$.
\end{remark}

We also denote the ring $\mathbb Z/n\mathbb Z$ by $\mathbb Z_n$. Let $U(n)$ denote the group of units in $\mathbb Z_n$ and $U(n)^2$ denote the set $\{x^2 : x\in U(n)\}$. For an odd prime $p$ we denote the set $U(p)^2$ by $Q_p$. For a divisor $m$ of $n$, the homomorphism $f_{n,m}:\mathbb Z_n\to \mathbb Z_m$ is given by $f_{n,m}(a+n\mathbb Z)=a+m\mathbb Z$. Mondal et al.\ \cite[Lem.\ 7]{SKS2} showed that the image of $U(n)$ under $f_{n,m}$ is $U(m)$.

Let $p$ be a prime divisor of $n$. We say that $v_p(n)=r$ if $p^r\mid n$ and $p^{r+1}\nmid n$. Suppose $r=v_p(n)$. For every $x\in\mathbb Z_n$ we denote the image of $x$ under $f_{n,\,p^r}$ by $x^{(p)}$. Given a sequence $S=(x_1,\ldots,x_l)$ in $\mathbb Z_n$, we get a sequence $S^{(p)}=(x_1^{(p)},\ldots,x_l^{(p)})$ in $\mathbb Z_{p^r}$. From this point onwards, we will only consider the case when  $M=R=\mathbb Z_n$.

Let $\Omega(n)$ denote the number of prime factors of $n$ counted with multiplicity. Grynkiewicz and Hennecart \cite{GH} showed that $D_{U(n)^2}\geq 2\Omega(n)+\text{min}\{v_3(n),v_5(n)\}+1$ when $n$ is odd, with equality if either $3\nmid n$ or $v_3(n)\geq v_5(n)$. This extends a result of Chintamani and Moriya \cite{CM}, and another of Adhikari, David, and Urroz \cite{ADU}. These results lead quite naturally to the question of the value of $D_{S(n)}$ where $S(n)=\{\,x^2:x\in \mathbb Z_n\}\setminus \{0\}$.

We determine the value of $D_{S(n)}$ for every $n$ and show that it depends on the parity of $n$ when $n$ is a square, and on the parity of $v_2(n)$ when $n$ is not a square. We also investigate the value of $C_{S(n)}$.

We show that $C_{U(25)^2}=9$, adding to the results which were obtained by Mondal et al.\ \cite{SKS}. Using this fact, we get that $C_{S(n)}\leq 9$ when $n$ is an odd square. In this article, we have obtained the following results:

\begin{itemize} 
\item We determine the size of $S(n)$ for every $n$.

\item When $n$ is a square, we get that $D_{S(n)}=4$ or $5$ when $n$ is even or odd respectively. 

\item When $n$ is not a square, we get that $D_{S(n)}=2$ or $3$ when $v_2(n)$ is odd or even respectively.

\item When $n$ is not a square of an odd number, we get that $C_{S(n)}=D_{S(n)}$.

\item When $n$ is a square of an odd, squarefree number, we get that $C_{S(n)}=9$. 

\item When $n$ is a square of an odd number $m$ such that $m$ is divisible by $p^2$ where $p$ is a prime which is at least seven, we get that $C_{S(n)}=D_{S(n)}$.
\end{itemize}

\begin{table}[h]
\begin{center}
\begin{tabular}{ |c|*{18}{p{4mm}}| }
 \hline
 $n$       & 2 & 3 & 4 & 5 & 6 & 7 & 8 & 9 & 10 & 11 & 12 & 13 & 14 & 15 & 16 & 17 & 18 & 19 \\
 $D_{S(n)}$& 2 & 3 & 4 & 3 & 2 & 3 & 2 & 5 & 2 & 3 & 3 & 3 & 2 & 3 & 4 & 3 & 2 & 3 \\
 \hline
\end{tabular}
\end{center}
\end{table}

\begin{table}[h]
\begin{center}
\begin{tabular}{ |c|*{18}{p{4mm}}| }
 \hline
 $n$       & 20 & 21 & 22 & 23 & 24 & 25 & 26 & 27 & 28 & 29 & 30 & 31 & 32 & 33 & 34 & 35 & 36 & 37 \\
 $D_{S(n)}$& 3 & 3 & 2 & 3 & 2 & 5 & 2 & 3 & 3 & 3 & 2 & 3 & 2 & 3 & 2 & 3 & 4 & 3 \\
 \hline
\end{tabular}
\end{center}
\caption{Values of $D_{S(n)}$ for all $n\in [2,37]$.}\label{table}
\end{table}

The only values of $n$ in the set $[2,37]$ for which $C_{S(n)}$ differs from $D_{S(n)}$ are 9 and 25. The smallest $n$ for which we have not been able to determine $C_{S(n)}$ is $81$.

\section{The size of \texorpdfstring{$S(n)$}{}}

If $n=p_1^{r_1}\cdots p_s^{r_s}$ where the $p_i$'s are distinct primes, then from the Chinese remainder theorem we get an isomorphism of rings
\[\mathbb Z/n\mathbb Z\simeq \mathbb Z/ p_1^{r_1}\mathbb Z\times\cdots\times \mathbb Z/p_s^{r_s}\mathbb Z.\]
Hence, it follows that 
\[|S(n)|=\big(\,|S(p_1^{r_1})|+1\,\big)\cdots \big(\,|S(p_s^{r_s})|+1\,\big)-1.\]

Thus, it is enough to determine the size of $S(p^r)$ where $p$ is a prime and $r$ is a positive integer. 

\begin{observation}\label{unique}
Let $p$ be a prime, $r$ be a positive integer, and $a\in \mathbb Z_{p^r}\setminus\{0\}$. Then there exists a unique $k\in [0,r-1]$ such that $a=p^ku$ where $u$ is a unit.
\end{observation}

For a real number $x$, we let $\lfloor x\rfloor$ denote the greatest integer which is at most equal to $x$.

\begin{lemma}\label{sqpr}
Let $p$ be a prime, $r$ be a positive integer, and $l=\lfloor (r-1)/2\rfloor$. Then we have that $S(p^r)=\bigcup_{\,k\in [0,\,l]}\,p^{2k}U(p^r)^2$. Also, this is a disjoint union.
\end{lemma}

\begin{proof}
Let $a\in S(p^r)$. Then there exists $b\in \mathbb Z_{p^r}\setminus \{0\}$ such that $a=b^2$. Thus, there exists $u\in U(p^r)$ and $k\in [0,r-1]$ such that $b=p^ku$. As $a=p^{2k}u^2$ and $a\neq 0$, by Observation \ref{unique} we see that $2k\in [0,r-1]$ and so $k\in [0,l]$. Hence, we see that $S(p^r)\subseteq \bigcup_{\,k\in [0,\,l]}\, p^{2k}U(p^r)^2$. It is easy to see that the reverse inclusion holds. From Observation \ref{unique} it follows that this union is a disjoint union.
\end{proof}

\begin{lemma}\label{sizes}
Let $p$ be a prime, $r$ be a natural number, and $l=\lfloor (r-1)/2\rfloor$. Then for every $k\in [0,l]$ we have that
$|\,p^{2k}\,U(p^r)^2\,|=|\,U(p^{r-2k})^2\,|$.
\end{lemma}

\begin{proof}
Let $k\in [0,l]$ and $f=f_{p^r,\,p^{r-2k}}$. For all $x,y\in\mathbb Z_{p^r}$ we see that
\[p^{2k}x=p^{2k}y\iff p^{2k}(x-y)=0\iff p^{r-2k}\mid (x-y)\iff x-y\in ker~f.\]

So we see that $\varphi:p^{2k}\,U(p^r)^2\to U(p^{r-2k})^2$ defined as $\varphi(p^{2k}x)=f(x)$ is well-defined and injective. We claim that the map $\varphi$ is also surjective. Let $y\in U(p^{r-2k})^2$. As the image of $U(p^r)$ under $f$ is $U(p^{r-2k})$, it follows that there exists $x\in U(p^r)^2$ such that $f(x)=y$. Hence, we see that $\varphi(p^{2k}x)=y$. This proves our claim. So it follows that $\varphi$ is a bijection.  
\end{proof}

The next result follows from Lemmas \ref{sqpr} and \ref{sizes}. 

\begin{theorem}
If $r$ is even \[|S(p^r)|=|U(p^r)^2|+|U(p^{r-2})^2|+\cdots+|U(p^4)^2|+|U(p^2)^2|\]
and if $r$ is odd \[|S(p^r)|=|U(p^r)^2|+|U(p^{r-2})^2|+\cdots+|U(p^3)^2|+|U(p)^2|.\]
\end{theorem}

It remains to determine the size of $U(n)^2$ when $n$ is a prime power. Let $n=p^r$ where $p$ is an odd prime and $r$ is a positive integer. Ireland and Rosen \cite[Thm.\ 2, p.\ 43]{IR} have shown that $U(n)$ is a cyclic group. So there is exactly one element of order two in $U(n)$. Thus, the kernel of the onto map $U(n)\to U(n)^2$ given by $x\mapsto x^2$ has order two. Hence, we see that $U(n)^2$ has index two in $U(n)$. So it follows that $|U(n)^2|=|U(n)|/2=p^{r-1}(p-1)/2$.

We have that $U(4)^2=U(2)^2=\{1\}$.  Let $n=2^r$ where $r$ is at least three. Ireland and Rosen \cite[Thm.\ 2', p.\ 43]{IR} have shown that $U(n)\simeq \mathbb Z_2\times \mathbb Z_{2^{r-2}}$. So there are exactly three elements of order two in $U(n)$. Thus, the kernel of the onto map $U(n)\to U(n)^2$ given by $x\mapsto x^2$ has order four. Hence, we see that $U(n)^2$ has index four in $U(n)$. So it follows that $|U(n)^2|=|U(n)|/4=2^{r-1}/4=2^{r-3}$.

\section{Some general results}

\begin{observation}\label{crt'}
Let $n=p_1^{r_1}p_2^{r_2}\ldots p_k^{r_k}$ where the $p_i$'s are distinct primes. By the Chinese remainder theorem we get an isomorphism \[\varphi:\mathbb Z/n\mathbb Z\to\mathbb Z/p_1^{r_1}\mathbb Z\times \cdots \times \mathbb Z/p_k^{r_k}\mathbb Z\] given by $\varphi(a)=(a^{(p_1)},\ldots,a^{(p_k)})$. As $\varphi$ is an isomorphism, we have that $a\in S(n)$ if and only if for every prime divisor $q$ of $n$, we have that $a^{(q)}$ is a square and there exists a prime divisor $p$ of $n$ such that $a^{(p)}\neq 0$.
\end{observation}

\begin{lemma}\label{crt}
Let $S$ be a sequence in $\mathbb Z_n$ and $p$ be a prime divisor of $n$ such that $v_p(n)=r$. Suppose the sequence $S^{(p)}$ is an $S(p^r)$-weighted zero-sum sequence. Then the sequence $S$ is an $S(n)$-weighted zero-sum sequence.
\end{lemma}

\begin{proof}
Let $S=(x_1,\ldots,x_l)$. Then we have that $S^{(p)}=(x_1^{(p)},\ldots,x_l^{(p)})$. There exist $b_1,\ldots,b_l\in S(p^r)$ such that $b_1x_1^{(p)}+\cdots+b_lx_l^{(p)}=0$. By Observation \ref{crt'} we see that for every $i\in [1,l]$ there exists $a_i\in S(n)$ such that $a_i^{(p)}=b_i$ and for each prime divisor $q$ of $n/p^r$ we have $a_i^{(q)}=0$. Let $\varphi$ be the isomorphism given by the Chinese remainder theorem as in Observation \ref{crt'}. As we get that $\varphi(a_1x_1+\cdots+a_lx_l)=0$, it follows that $a_1x_1+\cdots+a_lx_l=0$. Hence, we see that $S$ is an $S(n)$-weighted zero-sum sequence in $\mathbb Z_n$.
\end{proof}

\begin{corollary}\label{pr}
Let $p$ be a prime divisor of $n$ and $r=v_p(n)$. Then we have that $C_{S(n)}\leq C_{S(p^r)}$. 
\end{corollary}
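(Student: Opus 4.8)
The plan is to deduce this directly from Lemma \ref{crt} together with the definition of the consecutive-terms constant. Write $m=p^r$ where $r=v_p(n)$, and set $C=C_{S(m)^*}$. To show $C_{S(n)^*}\leq C$, it suffices to prove that \emph{every} sequence in $\Z_n$ of length $C$ has an $S(n)^*$-weighted zero-sum subsequence of consecutive terms.

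First I would fix an arbitrary sequence $S=(x_1,\ldots,x_C)$ in $\Z_n$ and pass to its projection $S^{(p)}=(x_1^{(p)},\ldots,x_C^{(p)})$, which is a sequence of length $C$ in $\Z_{p^r}=\Z_m$. By the definition of $C=C_{S(m)^*}$, the sequence $S^{(p)}$ must contain an $S(m)^*$-weighted zero-sum subsequence consisting of consecutive terms; say this block occupies the positions $i,i+1,\ldots,j$, so that $(x_i^{(p)},\ldots,x_j^{(p)})$ is an $S(p^r)^*$-weighted zero-sum sequence in $\Z_{p^r}$. The next step is to consider the corresponding consecutive subsequence $T=(x_i,\ldots,x_j)$ of the original sequence $S$. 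Since the projection operator is applied term by term and preserves positions, we have exactly $T^{(p)}=(x_i^{(p)},\ldots,x_j^{(p)})$, which we have just observed is $S(p^r)^*$-weighted zero-sum. Applying Lemma \ref{crt} to the sequence $T$ then yields that $T$ itself is an $S(n)^*$-weighted zero-sum sequence in $\Z_n$. As $T$ consists of consecutive terms of $S$, the sequence $S$ has the desired subsequence, completing the argument.

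The argument is essentially immediate once Lemma \ref{crt} is in hand, so there is no substantial obstacle; the only point requiring a moment's care is the bookkeeping that the consecutive block selected inside $S^{(p)}$ pulls back to a consecutive block inside $S$. This is automatic because $S^{(p)}$ is obtained from $S$ by applying the fixed map $x\mapsto x^{(p)}=f_{n,\,p^{r}}(x)$ to each coordinate, which neither reorders nor merges positions, so consecutiveness in $S^{(p)}$ corresponds exactly to consecutiveness in $S$. One should also note that Lemma \ref{crt} is stated for an arbitrary sequence in $\Z_n$, and in particular applies to the subsequence $T$ regarded as a sequence in its own right, which is precisely what the consecutive-terms formulation requires.
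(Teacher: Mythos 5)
Your proposal is correct and follows exactly the same route as the paper's own proof: project $S$ to $S^{(p)}$ in $\Z_m$, invoke the definition of $C_{S(m)^*}$ to extract a consecutive $S(m)^*$-weighted zero-sum block, note that the block pulls back to a consecutive block of $S$, and apply Lemma \ref{crt}. The only difference is that you spell out the positional bookkeeping more explicitly, which the paper leaves implicit.
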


\begin{proof}
Let $m=p^r$. Suppose $S$ is a sequence in $\mathbb Z_n$ having length $C_{S(m)}$. As $S^{(p)}$ is a sequence in $\mathbb Z_m$ having length $C_{S(m)}$, it follows that there exists a subsequence $T$ of $S$ having consecutive terms such that $T^{(p)}$ is an $S(m)$-weighted zero-sum sequence. So from Lemma \ref{crt} we see that $T$ is an $S(n)$-weighted zero-sum sequence. Hence, it follows that $C_{S(n)}\leq C_{S(m)}$.
\end{proof}

We will apply the next result later in the case when $p$ is a prime.

\begin{lemma}\label{q2a}
Let $p$ be an integer which is at least two, $r$ be an odd number, and $T$ be a sequence in $\mathbb Z_{p^r}$. Suppose the image of $T$ under $f_{p^r,\,p}$ is a $U(p)^2$-weighted zero-sum sequence. Then $T$ is an $S(p^r)$-weighted zero-sum sequence.
\end{lemma}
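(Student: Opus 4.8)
The plan is to start by unwinding the hypothesis. Writing $T=(x_1,\ldots,x_l)$ and $\bar x_i=f_{p^r,p}(x_i)$, the assumption supplies a non-empty set $I\su[1,l]$ and weights $\bar a_i\in U(p)^2$ with $\sum_{i\in I}\bar a_i\bar x_i=0$ in $\Z_p$, and I must produce weights $c_i\in S(p^r)^*$ with $\sum_{i\in I}c_ix_i=0$ in $\Z_{p^r}$. The single gadget that drives everything is the element $p^{r-1}$: since $r$ is odd, $r-1$ is even, so $p^{r-1}=\bigl(p^{(r-1)/2}\bigr)^2$ is a non-zero square and hence lies in $S(p^r)^*$ (cf.\ Lemma \ref{sqpr}). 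Moreover $p^{r-1}$ annihilates every element of $\Z_{p^r}$ divisible by $p$, because $p^{r-1}\cdot pu=p^{r}u=0$. This is exactly the place where the oddness of $r$ is essential: were $r$ even, the largest even power of $p$ available in $S(p^r)^*$ would be $p^{r-2}$, which fails to kill an element of $p$-adic valuation $1$.

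With this gadget in hand I would split $I=I_0\sqcup I_1$ according to whether $p\mid x_i$ or $x_i\in U(p^r)$. For every $i\in I_0$ set $c_i=p^{r-1}$, so that $c_ix_i=0$ and these indices drop out. If $I_1=\emptyset$ the proof is complete, since $I$ is non-empty. Otherwise I must choose square weights on $I_1$ with $\sum_{i\in I_1}c_ix_i=0$, knowing that $\sum_{i\in I_1}\bar a_i\bar x_i=0$ in $\Z_p$ (the $I_0$ terms having vanished modulo $p$). The lifting tool here is that, because $f_{p^r,p}\bigl(U(p^r)\bigr)=U(p)$ and $f_{p^r,p}$ is a homomorphism, we get $f_{p^r,p}\bigl(U(p^r)^2\bigr)=U(p)^2$, so each $\bar a_i$ lifts to a square unit. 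When $p$ is odd, $U(p^r)$ is cyclic and one checks that a unit of $\Z_{p^r}$ is a square precisely when its reduction modulo $p$ is a square; hence \emph{every} unit lying over a fixed $\bar a_i\in U(p)^2$ is automatically a square. I exploit this by a one-index adjustment: pick $i_0\in I_1$, choose arbitrary square-unit lifts $c_i$ of $\bar a_i$ for $i\in I_1\setminus\{i_0\}$, and then \emph{define} $c_{i_0}=-x_{i_0}^{-1}\sum_{i\in I_1\setminus\{i_0\}}c_ix_i$. By construction the $I_1$-sum is $0$, and reducing modulo $p$ gives $f_{p^r,p}(c_{i_0})=\bar a_{i_0}\in U(p)^2$; being a unit over a quadratic residue, $c_{i_0}$ is a square, so $c_{i_0}\in S(p^r)^*$ as needed.

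The case $p=2$ has to be treated separately, and it is where the freedom above collapses, since not every odd lift of $1$ is a square modulo $2^r$. Fortunately $U(2)^2=\{1\}$, so the hypothesis reduces to $\sum_{i\in I}\bar x_i=0$ in $\Z_2$, i.e.\ an even number of the selected $x_i$ are odd, whence $\sum_{i\in I}x_i$ is even. Taking the uniform weight $c_i=2^{r-1}$ for all $i\in I$ then yields $\sum_{i\in I}c_ix_i=2^{r-1}\sum_{i\in I}x_i=0$. I expect the genuine obstacle of the whole argument to be precisely this correction step: upgrading a relation valid only modulo $p$ to one valid in $\Z_{p^r}$. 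For odd $p$ this rests on the square-lifting property of units, while for $p=2$ it must instead be routed through the annihilator $2^{r-1}$ together with the parity forced by the hypothesis; identifying in advance that these two mechanisms exhaust the possibilities, and that $r$ odd is exactly what makes $p^{r-1}$ usable, is the crux.
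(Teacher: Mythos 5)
Your proof is correct, and it shares the paper's key gadget --- the observation that $r$ odd makes $p^{r-1}=\bigl(p^{(r-1)/2}\bigr)^2$ a non-zero square annihilating every multiple of $p$ --- but you deploy it less efficiently than the paper does. The paper's proof is uniform and shorter: it lifts each weight $\bar a_i$ to some $a_i\in U(p^r)^2$ (using $f_{p^r,p}\bigl(U(p^r)^2\bigr)=U(p)^2$), notes that $x=\sum a_ix_i$ reduces to $0$ mod $p$ and hence is divisible by $p$, and then multiplies the \emph{entire} relation by $c=p^{r-1}$, so that $cx=0$ and the new weights $c\,a_i\in p^{r-1}U(p^r)^2\su S(p^r)^*$ work for every term at once --- no splitting of indices, no case distinction between $p=2$ and $p$ odd, and no need for $p$ to be prime. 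Your route instead reserves $p^{r-1}$ for the indices with $p\mid x_i$ and solves exactly on the unit indices via a one-index correction, which forces you to invoke cyclicity of $U(p^r)$ and the fact that a unit of $\Z_{p^r}$ is a square iff its reduction mod $p$ is (true only for $p$ an odd prime), and to treat $p=2$ separately through $U(2)^2=\{1\}$. What your version buys is a slightly stronger output (genuine unit-square weights on the unit terms rather than weights in $p^{r-1}U(p^r)^2$), but the lemma does not need that. Two small points to tidy if you keep your argument: the statement allows $p$ to be any number, and your square-lifting step does not cover composite $p$ (the paper's does); and when $|I_1|=1$ your formula outputs $c_{i_0}=0$, which is not an admissible weight --- you should note that this corner is vacuous, since a single unit term $\bar a_{i_0}\bar x_{i_0}$ cannot vanish in $\Z_p$.
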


\begin{proof}
Let $T=(x_1,\ldots,x_k)$ be a sequence in $\mathbb Z_{p^r}$ and $T'=(x_1',\ldots,x_k')$ be the image of $T$ under $f_{p^r,\,p}$. For each $i\in [1,k]$ there exist $a_i'\in U(p)^2$ such that $a_1'x_1'+\cdots +a_k'x_k'=0$. Mondal et al.\ \cite[Lem.\ 7]{SKS2} showed that the image of $U(p^r)^2$ under $f_{p^r,\,p}$ is $U(p)^2$. So for each $i\in [1,k]$ there exists $a_i\in U(p^r)^2$ such that $f_{p^r,\,p}(a_i)=a_i'$.

Let $x=a_1x_1+\cdots+a_kx_k$. As $f_{p^r,\,p}(x)=a_1'x_1'+\cdots +a_k'x_k'=0$, it follows that $p$ divides $x$. We see that $c=p^{r-1}=(p^{(r-1)/2})^2\in S(p^r)$. As $p$ divides $x$ we get that $cx=0$. Thus, it follows that $(c\,a_1)x_1+\cdots+(c\,a_k)x_k=0$. For each $i\in [1,k]$ we see that $c\,a_i\in S(p^r)$. Hence, it follows that $T$ is an $S(p^r)$-weighted zero-sum sequence.
\end{proof}

\section{When \texorpdfstring{$n$}{} is an even square}

The next two results will be used to determine the value of $D_{S(n)}$ when $n$ is an even square. 

\begin{lemma}\label{les2}
Let $r$ be a non-zero even number. Let $S=(x_1,x_2,x_3)$ be a sequence in $U(2^r)$ whose image under $f_{2^r,\,4}$ is the sequence $(1,1,1)$. Then $S$ does not have any $S(2^r)$-weighted zero-sum subsequence. 
\end{lemma}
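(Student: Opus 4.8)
The plan is to argue via $2$-adic valuations that every non-empty weighted subsum of $S$ has $2$-adic valuation strictly less than $r$, and hence cannot vanish in $\Z_{2^r}$.

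First I would record the shape of the admissible weights. By Lemma \ref{sqpr}, every element of $S(2^r)^*$ has the form $2^{2k}w$ where $w\in U(2^r)^2$ and $2k\in[0,r-1]$; in particular its $2$-adic valuation is the \emph{even} number $2k$. Since $r$ is even, the constraint $2k\le r-1$ forces $2k\le r-2$. I would also note that the square of any unit in $\Z_{2^r}$ is congruent to $1$ modulo $4$ (being an odd perfect square), and that by hypothesis each $x_i\equiv 1\pmod 4$; consequently, if $a_i=2^{2k_i}w_i$ is the weight attached to $x_i$, then $a_ix_i=2^{2k_i}b_i$ with $b_i=w_ix_i\equiv 1\pmod 4$.

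Next I would take any non-empty subsequence indexed by $I\su\{1,2,3\}$ together with weights $a_i\in S(2^r)^*$, and set $v=\min_{i\in I}2k_i$ and $t=|\{i\in I:2k_i=v\}|$, so that $1\le t\le|I|\le 3$. Writing the subsum as $2^v\sum_{i\in I}2^{2k_i-v}b_i$, the terms with $2k_i>v$ each contribute a multiple of $4$ (as $2k_i-v$ is even and positive), while the $t$ terms with $2k_i=v$ each contribute $b_i\equiv 1\pmod 4$. Hence the bracketed factor is $\equiv t\pmod 4$; since $t\in\{1,2,3\}$ it is nonzero modulo $4$, so its own $2$-adic valuation is at most $1$.

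Combining these, the subsum has $2$-adic valuation at most $v+1\le (r-2)+1=r-1<r$, so it is a nonzero element of $\Z_{2^r}$. As this holds for every choice of subsequence and weights, $S$ admits no $S(2^r)^*$-weighted zero-sum subsequence. The only step requiring care is the bookkeeping that keeps $t$ in $\{1,2,3\}$: it is precisely the hypotheses that $|S|=3$ (so $t\le 3<4$) and that $r$ is even (so $v\le r-2$) that force the valuation to stay below $r$, and I expect this interplay, rather than any hard computation, to be the crux.
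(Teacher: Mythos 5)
Your proposal is correct and follows essentially the same route as the paper: both decompose each weight as $2^{2k}w$ with $w\in U(2^r)^2$ via Lemma \ref{sqpr}, use $r$ even to force $2k\le r-2$, factor out the minimal power of $2$, and reduce modulo $4$ to find that the minimal-valuation terms contribute $t\equiv 1,2,3\pmod 4$ (the paper phrases this as $(1,1,1)$ in $\Z_4$ having no zero-sum subsequence, using $U(4)^2=\{1\}$). The only difference is presentational: you show directly that every weighted subsum has $2$-adic valuation at most $r-1$, whereas the paper argues by contradiction.
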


\begin{proof}
As the image of $U(2^r)^2$ under $f_{2^r,\,4}$ is $U(4)^2=\{1\}$ and the sequence $(1,1,1)$ in $\mathbb Z_4$ does not have any zero-sum subsequence, it follows that $S$ does not have any $U(2^r)^2$-weighted zero-sum subsequence. However, as we want to show that $S$ does not have any $S(2^r)$-weighted zero-sum subsequence, we need to modify this argument. Suppose $T$ is an $S(2^r)$-weighted zero-sum subsequence of $S$. Let $I=\{i\in [1,3]:x_i$ is a term of $T\}$.

For each $i\in I$ there exists $a_i\in S(2^r)$ such that $\sum_{i\in I} a_ix_i=0$. By Lemma \ref{sqpr} for each $i\in I$ we see that $a_i=2^{r_i}u_i$ where $r_i$ is an even number which is at most $r-2$ and $u_i\in U(2^r)^2$. So we get that $\sum_{i\in I} 2^{r_i}u_ix_i=0$. Let $r'$ be the minimum of the set $\{r_i:i\in I\}$ and $J=\{i\in I:r_i=r'\}$. Let $f=f_{2^r,\,4}$. As $r'\leq r-2$ we see that four divides $\sum_{i\in J} u_i\,x_i$ and hence $0=\sum_{i\in J}f(u_i)f(x_i)=\sum_{i\in J}1$. So we get the contradiction that the sequence $(1,1,1)$ in $\mathbb Z_4$ has a zero-sum subsequence. Hence, it follows that $S$ does not have any $S(2^r)$-weighted zero-sum subsequence.
\end{proof}

\begin{lemma}\label{les}
Let $p$ be a prime and $r$ be a non-zero even number. Let $(z_1,z_2)$ be a sequence in $U(p^2)$ whose image under $f_{p^2,\,p}$ is not a $Q_p$-weighted zero-sum sequence. Let $S=(x_1,x_2,y_1)$ be a sequence in $U(p^r)$ whose image under $f_{p^r,\,p^2}$ is the sequence $(z_1,z_2,p)$. Then $S$ does not have any $S(p^r)$-weighted zero-sum subsequence.
\end{lemma}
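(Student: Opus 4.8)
The plan is to mirror the valuation argument of Lemma \ref{les2}, but to exploit a parity phenomenon created by the extra term $y_1$ and then to descend all the way down to $\Z_p$, so as to bring in the hypothesis on $(z_1,z_2)$. First I would record the $p$-adic valuations of the three terms: since the natural map $f_{p^r,p^2}$ preserves valuations that are less than $2$, the images $z_1,z_2\in U(p^2)$ force $v_p(x_1)=v_p(x_2)=0$, whereas $f_{p^r,p^2}(y_1)=p$ forces $v_p(y_1)=1$. Now suppose, for contradiction, that $S$ has an $S(p^r)^*$-weighted zero-sum subsequence $T$ with index set $I$; by Lemma \ref{sqpr} the weights may be written $a_i=p^{s_i}u_i$ with $s_i\in[0,r-2]$ even and $u_i\in U(p^r)^2$. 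The crucial observation is that $v_p(a_i\,\mathrm{term}_i)$ equals $s_i$ (even) for the unit terms $x_1,x_2$ and equals $s_3+1$ (odd) for $y_1$, so the summands of $\sum_{i\in I}a_i\,\mathrm{term}_i=0$ split cleanly by the parity of their valuation.

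Next I would set $l=\min_{i\in I}v_p(a_i\,\mathrm{term}_i)$ and $J=\{i\in I:v_p(a_i\,\mathrm{term}_i)=l\}$, exactly as in Lemma \ref{les2}. If $l$ is odd then necessarily $J=\{3\}$, the minimum is attained uniquely (all other valuations are even, hence $>l$), and the whole sum has valuation exactly $l=s_3+1\le r-1<r$, contradicting that it is $0$ in $\Z_{p^r}$. Hence $l$ must be even, which forces $J\su\{1,2\}$ (with $J$ nonempty) and $l\le r-2$, so that $r-l\ge 2$.

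The final step is to reduce modulo $p$. Factoring $p^l$ out of the relation and using $r-l\ge 1$, one finds that $p$ divides $\sum_{i\in I}p^{v_i-l}w_i$, where $w_i$ is the unit part of $a_i\,\mathrm{term}_i$. Every summand with $i\notin J$ carries a positive power of $p$ and so vanishes modulo $p$; in particular the $y_1$-contribution, whose exponent $s_3+1-l$ is at least $1$, disappears. What survives is $\sum_{j\in J}u_jx_j\equiv 0\pmod p$, and applying $f_{p^r,p}$ together with $f_{p^r,p}\big(U(p^r)^2\big)=U(p)^2=Q_p$ (as used in Lemma \ref{q2a}) yields $\sum_{j\in J}f_{p^r,p}(u_j)\,\bar z_j=0$ in $\Z_p$ with all weights in $Q_p$, where $\bar z_j$ denotes the image of $z_j$ under $f_{p^2,p}$. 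If $|J|=1$ this is a product of units equal to $0$, which is absurd; if $J=\{1,2\}$ it exhibits the image of $(z_1,z_2)$ under $f_{p^2,p}$ as a $Q_p$-weighted zero-sum, contradicting the hypothesis. Either way a contradiction is reached, so no such $T$ exists.

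I expect the main obstacle to be the case in which all three terms lie in $T$ and the valuation $s_3+1$ of the $y_1$-term exceeds the minimum $l$ by exactly one. There the parity argument alone does not close the proof, because the $y_1$-term contributes a nonzero multiple of $p$ modulo $p^2$. The key realization is that one should \emph{not} stop the reduction at $\Z_{p^2}$ (as Lemma \ref{les2} does for $p=2$) but push all the way down to $\Z_p$, the level at which $Q_p$ lives: there the troublesome $y_1$-term is annihilated while the relation between $\bar z_1$ and $\bar z_2$ is retained. Checking that $J$ is nonempty and contained in $\{1,2\}$ in the even case, and that the single-index subcase is genuinely impossible, are the routine verifications that remain.
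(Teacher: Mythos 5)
Your proposal is correct and follows essentially the same route as the paper's proof: decompose the weights via Lemma \ref{sqpr}, exploit that the $x$-terms contribute even $p$-adic valuations while the $y_1$-term contributes an odd one, take the minimal valuation $l$, rule out the cases where the minimum is attained at a single index (including the $y_1$-only case), and reduce modulo $p$ to contradict the hypothesis that the image of $(z_1,z_2)$ is not a $Q_p$-weighted zero-sum sequence. Your reorganization by the parity of $l$ and the explicit $|J|=1$ versus $|J|=2$ split is just a slightly different packaging of the paper's claim that the minimum forces $I=\{1,2\}$ with $r_1=r_2=l$.
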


\begin{proof}
Suppose the sequence $S=(x_1,x_2,y_1)$ has an $S(p^r)$-weighted zero-sum subsequence $T$. Let $I=\{i\in [1,2]:x_i~\textrm{is a term of}~T\}$. Let $J=\{1\}$ if $y_1$ is a term of $T$ and let $J=\emptyset$ if $y_1$ is not a term of $T$. Then for every $i\in I$ and $j\in J$ there exist $a_i,b_j\in S(p^r)$ such that $\sum_{i\in I}a_ix_i+\sum_{j\in J}b_jy_j=0$. As $y_1$ maps to $p$ under $f_{p^r,\,p^2}$ there exists $w_1\in U(p^r)$ such that $y_1=pw_1$. By Lemma \ref{sqpr}, for each $i\in I$ and $j\in J$ we see that $a_i=p^{r_i}u_i$ and $b_j=p^{s_j}v_j$ where $r_i,s_j\in [0,r-2]$ are even and $u_i,v_j\in U(p^r)^2$. So we get that
\begin{equation}\label{zs'}
\sum_{i\in I}p^{r_i}u_i\,x_i+\sum_{j\in J}p^{s_j+1}v_jw_j=0.                                                                                                                                                                                                                                                                                                                                                                                                                                                                           \end{equation}

Consider the set $L=\{r_i:i\in I\}\cup \{s_j+1:j\in J\}$. Let $r'$ be the minimum of $L$. As $r\geq 2$ is even and $s_1$ is even, it follows that $r'\leq r-1$. Suppose there exists $i\in I$ such that $r_i=r'$. We claim that $I=\{1,2\}$ and $r_1=r_2=r'$. If not, from (\ref{zs'}) we get that $p^{r'+1}$ divides $p^{r'}w$ where $w$ is a unit. As $r'\leq r-1$ we get the contradiction that $p$ divides $w$. By a similar argument, we see that $s_1+1\neq r'$.

As $r'\leq r-1$, from (\ref{zs'}) we see that $p$ divides $u_1x_1+u_2x_2$. Let $f=f_{p^r,\,p}$. We get that $f(u_1)f(x_1)+f(u_2)f(x_2)=0$. As $u_1,u_2\in U(p^r)^2$, it follows that $f(u_1),f(u_2)\in Q_p$. So the sequence $\big(f(x_1),f(x_2)\big)$ is a $Q_p$-weighted zero-sum sequence. Thus, we get the contradiction that the image of the sequence $(z_1,z_2)$ under $f_{p^2,\,p}$ is a $Q_p$-weighted zero-sum sequence. Hence, it follows that $S$ does not have any $S(p^r)$-weighted zero-sum subsequence. 
\end{proof}

\begin{theorem}\label{lbe2}
Let $n$ be an even square. Then we have that  $D_{S(n)}\geq 4$.
\end{theorem}

\begin{proof}
Mondal et al.\ \cite[Cor.\ 2, Lem.\ 7]{SKS2} have shown that for every odd prime $p$ we can find a sequence $(u_p,v_p)$ in $U(p^2)$ whose image under $f_{p^2,\,p}$ is not a $Q_p$-weighted zero-sum sequence. Consider the sequence $(u_p,v_p,p)$ in $\mathbb Z_{p^2}$.

For each prime divisor $p$ of $n$, if $n_p=p^{v_p(n)}$ then the map $f_{n_p,\,p^2}$ is onto. So by the Chinese remainder theorem we can find a sequence $S=(x_1,x_2,x_3)$ in $\mathbb Z_n$ such that for every prime divisor $p$ of $n$ the image of $S$ under $f_{n,\,p^2}$ is $(u_p,v_p,p)$ if $p$ is odd, and under $f_{n,\,4}$ is $(1,1,1)$.

For every prime divisor $p$ of $n$, we see that the sequence $S^{(p)}$ in $\mathbb Z_{p^r}$  has the form as in the statement of Lemma \ref{les2} if $p=2$, or of Lemma \ref{les} if $p$ is odd. So for every prime divisor $p$ of $n$, if $r=v_p(n)$, it follows that the sequence $S^{(p)}$ does not have any $S(p^r)$-weighted zero-sum subsequence.

Suppose $T$ is an $S(n)$-weighted zero-sum subsequence of $S$. Let $x$ be a term of $T$ and $a\in S(n)$ be the coefficient of $x$ in an $S(n)$-weighted zero-sum which is obtained from $T$. As $a\neq 0$, there is a prime divisor $p$ of $n$ such that  $a^{(p)}\neq 0$. So we get the contradiction that the sequence $S^{(p)}$ in $\mathbb Z_{p^r}$ has an $S(p^r)$-weighted zero-sum subsequence where  $r=v_p(n)$.

Thus, it follows that the sequence $S$ does not have any $S(n)$-weighted zero-sum subsequence. Hence, we see that $D_{S(n)}\geq 4$.  
\end{proof}

\begin{lemma}\label{s2a}
Let $r$ be a non-zero even number and $p$ be a positive integer. Suppose $T$ is a sequence in $\mathbb Z_{p^r}$ whose image under $f_{p^r,\,p^2}$ is a $U(p^2)^2$-weighted zero-sum sequence. Then $T$ is an $S(p^r)$-weighted zero-sum sequence.
\end{lemma}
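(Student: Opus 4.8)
The plan is to follow the proof of Lemma \ref{q2a} almost verbatim, with the modulus $p$ replaced by $p^2$ and the parity hypothesis on $r$ switched from odd to even. Write $T=(x_1,\ldots,x_k)$ and let $T'=(x_1',\ldots,x_k')$ be its image under $f=f_{p^r,\,p^2}$. Since $T'$ is assumed to be a $U(p^2)^2$-weighted zero-sum sequence, there are weights $a_i'\in U(p^2)^2$ with $a_1'x_1'+\cdots+a_k'x_k'=0$ in $\Z_{p^2}$. The first step is to lift these weights: by Lemma 7 of \cite{SKS2} the natural map $f$ sends $U(p^r)$ onto $U(p^2)$, and hence sends $U(p^r)^2$ onto $U(p^2)^2$, because any $a_i'=(z_i')^2$ with $z_i'\in U(p^2)$ is the image of $w_i^2$ for any unit $w_i\in U(p^r)$ lying over $z_i'$. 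So for each $i$ I choose $a_i\in U(p^r)^2$ with $f(a_i)=a_i'$.

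Next I set $x=a_1x_1+\cdots+a_kx_k$ in $\Z_{p^r}$. Since $f(x)=a_1'x_1'+\cdots+a_k'x_k'=0$, the element $x$ lies in the kernel of $f$, that is, $p^2\mid x$. The crucial arithmetic input is that, as $r\geq 2$ is even, the exponent $r-2$ is a nonnegative even integer in $[0,r-1]$, so $c:=p^{r-2}=\big(p^{(r-2)/2}\big)^2$ is a nonzero square of $\Z_{p^r}$. Multiplying $x$ by $c$ annihilates it, because $p^{r-2}\cdot p^2=p^r=0$, and therefore $(c\,a_1)x_1+\cdots+(c\,a_k)x_k=0$.

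It remains to verify that the new weights $c\,a_i$ are admissible, i.e.\ that $c\,a_i\in S(p^r)^*$ for every $i$. Writing $a_i=u_i^2$ with $u_i$ a unit, one gets $c\,a_i=p^{r-2}u_i^2=\big(p^{(r-2)/2}u_i\big)^2$, which is manifestly a square; and it is nonzero since $r-2\leq r-1$, so by Corollary \ref{unique} (equivalently Lemma \ref{pr1}) the product of $p^{r-2}$ with a unit does not vanish in $\Z_{p^r}$. This exhibits $T$ as an $S(p^r)^*$-weighted zero-sum sequence.

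I do not expect a genuine obstacle here: the argument is a routine transcription of Lemma \ref{q2a}, and the only points that require care are the surjectivity $f\big(U(p^r)^2\big)=U(p^2)^2$ and the check that each $c\,a_i$ is a \emph{nonzero} square, both handled above. The sole place where the evenness of $r$ is really used is in ensuring $r-2$ is even, so that the multiplier $c=p^{r-2}$ is itself a square rather than merely a power of $p$; this is the analogue of the step in Lemma \ref{q2a} where the oddness of $r$ made $p^{r-1}$ a square.
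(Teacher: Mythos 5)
Your proof is correct and follows essentially the same route as the paper's: lift the weights from $U(p^2)^2$ to $U(p^r)^2$ via Lemma 7 of \cite{SKS2}, observe that the lifted weighted sum is divisible by $p^2$, and multiply by the nonzero square $c=p^{r-2}$ to annihilate it. The extra checks you record (surjectivity of $f$ on squares, and non-vanishing of $c\,a_i$ via Lemma \ref{pr1}) are exactly the points the paper leaves implicit, so there is nothing to correct.
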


\begin{proof}
Let $T=(x_1,\ldots,x_k)$ be a sequence in $\mathbb Z_{p^r}$ and $T'=(x_1',\ldots,x_k')$ be the image of $T$  under $f_{p^r,\,p^2}$. Suppose $T'$ is a $U(p^2)^2$-weighted zero-sum sequence. Then there exist $a_1',\ldots,a_k'\in U(p^2)^2$ such that $a_1'x_1'+\cdots +a_k'x_k'=0$. Mondal et al.\ \cite[Lem.\ 7]{SKS2} have shown that $f_{p^r,\,p^2}\big(U(p^r)^2\big)=U(p^2)^2$. Thus, for each $i\in [1,k]$ there exists $a_i\in U(p^r)^2$ such that $f_{p^r,\,p^2}(a_i)=a_i'$.

Let $x=a_1x_1+\cdots+a_kx_k$. As $f_{p^r,\,p^2}(x)=a_1'x_1'+\cdots +a_k'x_k'=0$ it follows that $p^2$ divides $x$. As $r$ is an even number which is at least two, we see that $p^{r-2}=(p^{(r-2)/2})^2\in S(p^r)$. As $p^2$ divides $x$ we get $p^{r-2}x=0$ and so $(c\,a_1)x_1+\cdots+(c\,a_k)x_k=0$ where $c=p^{r-2}$. As the $a_i$'s are in $U(p^r)^2$ it follows that $T$ is an $S(p^r)$-weighted zero-sum sequence. 
\end{proof}

The next result follows immediately from Lemma \ref{s2a}. 

\begin{corollary}\label{u2s}
Let $r$ be a non-zero even number and $p$ be a positive integer. Then we have that $D_{S(p^r)}\leq D_{U(p^2)^2}$ and $C_{S(p^r)}\leq C_{U(p^2)^2}$. 
\end{corollary}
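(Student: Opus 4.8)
The plan is to transport weighted zero-sum subsequences from $\Z_{p^2}$ back to $\Z_{p^r}$ along the natural map $f_{p^r,\,p^2}$, using Lemma \ref{s2a} as the bridge. Here I read $D_{U(p^2)^2}$ and $C_{U(p^2)^2}$ as the constants $D_{U(p^2)^2}(\Z_{p^2})$ and $C_{U(p^2)^2}(\Z_{p^2})$, so that the relevant weight set $U(p^2)^2$ lives in $\Z_{p^2}$ and the ambient module is $\Z_{p^2}$. The structure closely parallels the pullback already carried out in Corollary \ref{pr}.

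For the bound $D_{S(p^r)^*}\leq D_{U(p^2)^2}$, I would start with an arbitrary sequence $S$ in $\Z_{p^r}$ of length $D_{U(p^2)^2}$ and let $\bar S$ denote its image under $f_{p^r,\,p^2}$, a sequence of the same length in $\Z_{p^2}$. By the definition of $D_{U(p^2)^2}$, the sequence $\bar S$ has a $U(p^2)^2$-weighted zero-sum subsequence; let $I$ be the corresponding index set. Letting $T$ be the subsequence of $S$ indexed by the same set $I$, the image of $T$ under $f_{p^r,\,p^2}$ is precisely this $U(p^2)^2$-weighted zero-sum subsequence, so by Lemma \ref{s2a} the sequence $T$ is an $S(p^r)^*$-weighted zero-sum subsequence of $S$. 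As $S$ was an arbitrary sequence of length $D_{U(p^2)^2}$, this yields $D_{S(p^r)^*}\leq D_{U(p^2)^2}$.

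The argument for $C_{S(p^r)^*}\leq C_{U(p^2)^2}$ runs in parallel, the only extra bookkeeping being that the subsequence must consist of consecutive terms. Starting from a sequence $S$ in $\Z_{p^r}$ of length $C_{U(p^2)^2}$, its image $\bar S$ admits a $U(p^2)^2$-weighted zero-sum subsequence of consecutive terms, whose index set $I$ is an interval of consecutive indices. The subsequence $T$ of $S$ indexed by this same interval $I$ is again consecutive, and as before Lemma \ref{s2a} makes $T$ an $S(p^r)^*$-weighted zero-sum subsequence. This gives $C_{S(p^r)^*}\leq C_{U(p^2)^2}$.

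There is no real obstacle here: once Lemma \ref{s2a} is in hand, the statement is a formal consequence of the definitions. The only point requiring care is the trivial observation that a subsequence of $S$ and the corresponding subsequence of its image $\bar S$ share the same index set, so that the property of being consecutive is preserved under passing between $S$ and $\bar S$ in both directions.
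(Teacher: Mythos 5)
Your proof is correct and is exactly the intended argument: the paper states this corollary without proof, as an immediate consequence of Lemma \ref{s2a} via the same pullback mechanism used in Corollary \ref{pr}. Your reading of $D_{U(p^2)^2}$ and $C_{U(p^2)^2}$ as constants for $\Z_{p^2}$ matches the paper's usage, and the bookkeeping about index sets and consecutiveness is handled correctly.
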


\begin{theorem}\label{ube2'}
Let $r$ be a non-zero even number. Then we have $C_{S(2^r)}\leq 4$.  
\end{theorem}

\begin{proof}
Mondal et al.\ \cite[Cor.\ 1]{SKS} have shown that $C_{\{1\}}=4$. As $U(4)^2=\{1\}$, from Corollary \ref{u2s} it follows that $C_{S(2^r)}\leq 4$.
\end{proof}

\begin{corollary}
Let $n$ be an even square. Then we have $D_{S(n)}=C_{S(n)}=4$.
\end{corollary}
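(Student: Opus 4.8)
The plan is to sandwich both constants between $4$ and $4$ by combining the lower bound already established in Theorem~\ref{lbe2} with an upper bound obtained by reducing to the prime power $2^{v_2(n)}$. The first thing I would record is a structural observation about the $2$-adic valuation: since $n$ is a perfect square, every $v_p(n)$ is even, and since $n$ is even we have $v_2(n)\geq 1$; together these force $r:=v_2(n)$ to be an even integer with $r\geq 2$. This is the key fact that makes the earlier theorems applicable, and it is really the only place where the hypothesis ``even square'' is used in full.

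For the lower bound, I would simply invoke Theorem~\ref{lbe2}, which gives $D_{S(n)^*}\geq 4$ for any even square $n$, so no further work is needed there. For the upper bound, I would set $m=2^{v_2(n)}=2^{r}$ and apply Corollary~\ref{pr} with the prime $p=2$, yielding $C_{S(n)^*}\leq C_{S(m)^*}=C_{S(2^{r})^*}$. Because $r\geq 2$ is even, Theorem~\ref{ube2'} then gives $C_{S(2^{r})^*}\leq 4$, hence $C_{S(n)^*}\leq 4$.

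Finally I would chain these through the general inequality $D_{S(n)^*}\leq C_{S(n)^*}$ from Remark~\ref{c} to obtain
$$4\leq D_{S(n)^*}\leq C_{S(n)^*}\leq C_{S(2^{r})^*}\leq 4,$$
forcing $D_{S(n)^*}=C_{S(n)^*}=4$. I do not anticipate any genuine obstacle here: all the substantive content has been pushed into the preceding results, and the only thing one must be careful about is verifying that $v_2(n)$ is even and at least $2$ so that both Corollary~\ref{pr} and Theorem~\ref{ube2'} apply with the correct parity and range hypotheses. The proof is thus essentially an assembly of the earlier lemmas rather than a new argument.
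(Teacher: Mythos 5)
Your proposal is correct and follows exactly the paper's own argument: lower bound from Theorem~\ref{lbe2}, upper bound from Corollary~\ref{pr} applied at $p=2$ together with Theorem~\ref{ube2'}, and the chain closed via $D_A(n)\leq C_A(n)$. Your explicit check that $v_2(n)\geq 2$ is even is a welcome (if minor) addition that the paper leaves implicit.
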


\begin{proof}
From Theorem \ref{lbe2} we have $D_{S(n)}\geq 4$. By Theorem \ref{ube2'} and Corollary \ref{pr} we have $C_{S(n)}\leq 4$. As $D_A(n)\leq C_A(n)$ for every $A\subseteq\mathbb Z_n$, it follows that $D_{S(n)}=C_{S(n)}=4$.
\end{proof}

\section{When \texorpdfstring{$n$}{} is not a square}

\begin{proposition}\label{seq}
Let $n$ be odd. We can find a sequence $S=(u,v)$ in $U(n)$ such that for each prime divisor $p$ of $n$, the image of $S$ under $f_{n,\,p}$ does not have any $Q_p$-weighted zero-sum subsequence. 
\end{proposition}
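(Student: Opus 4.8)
The plan is to reduce the combinatorial condition to a statement about quadratic residues and then to realize it simultaneously at every prime divisor of $n$ by a single unit, using the Chinese remainder theorem.

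First I would analyze when a two-term sequence $(a,b)$ of \emph{nonzero} elements of $\Z_p$ has a $Q_p$-weighted zero-sum subsequence. Since $\Z_p$ is a field and every element of $Q_p$ is nonzero, no single term can produce a zero-sum; hence such a subsequence must use both terms, i.e. there must exist $q_1,q_2\in Q_p$ with $q_1a+q_2b=0$. As $Q_p=U(p)^2$ is a subgroup of $U(p)$, the ratio $q_1/q_2$ ranges over all of $Q_p$, so this equation is solvable precisely when $-a/b\in Q_p$, equivalently when $-ab$ is a nonzero quadratic residue modulo $p$. Thus $(a,b)$ has \emph{no} $Q_p$-weighted zero-sum subsequence exactly when $-ab$ is a quadratic non-residue modulo $p$.

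With this reduction in hand, I would take $v=1$ and build $u$ prime by prime. For each prime divisor $p$ of $n$, since $n$ is odd $p$ is odd and the set of quadratic non-residues modulo $p$ is nonempty (it has $(p-1)/2$ elements), so I can choose $c_p\in U(p)$ with $-c_p\notin Q_p$. Using $f_{p^{v_p(n)},\,p}\big(U(p^{v_p(n)})\big)=U(p)$ together with the isomorphism $\Z_n\simeq\prod_{p\mid n}\Z_{p^{v_p(n)}}$ given by the Chinese remainder theorem, I can produce a single $u\in U(n)$ with $f_{n,p}(u)=c_p$ for every prime divisor $p$ of $n$. Then for each such $p$ the image of $S=(u,v)$ under $f_{n,p}$ is $(c_p,1)$, and since $-c_p\cdot 1=-c_p$ is a non-residue, the reduction above shows that this image has no $Q_p$-weighted zero-sum subsequence, as required.

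The argument is essentially elementary; the only points requiring care are the reduction in the first step — correctly translating the two-term zero-sum condition into the multiplicative condition $-ab\in Q_p$ and checking that no single-term subsequence can vanish — and ensuring that the prescribed local residues $c_p$ can be lifted to a single \emph{unit} $u\in U(n)$, which is exactly what the surjectivity of $f_{p^{v_p(n)},\,p}$ on units and the Chinese remainder theorem provide. I do not expect any genuine obstacle beyond assembling these routine facts.
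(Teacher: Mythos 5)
Your proof is correct, and its global structure (construct the pair locally at each prime, lift through the surjection $U(p^{v_p(n)})\to U(p)$, and glue with the Chinese remainder theorem) is exactly the paper's. The one genuine difference is at the local step: the paper simply cites Corollary 2 of \cite{SKS2} for the existence of a pair $(u_q,v_q)$ in $U(q)$ with no $Q_q$-weighted zero-sum subsequence, whereas you prove this from scratch by observing that a pair of units $(a,b)$ admits such a subsequence if and only if $-ab\in Q_p$ (single terms being ruled out since elements of $Q_p$ are units), and then taking $v=1$ and $-c_p$ a non-residue. Your reduction is accurate --- the ratio $q_1/q_2$ does range over the subgroup $Q_p$, and non-residues exist for every odd $p$ --- so your version has the advantage of being self-contained and of normalizing one coordinate to $1$, at the cost of redoing a computation the paper outsources; otherwise the two arguments are interchangeable.
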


\begin{proof}
Let $p$ be a prime divisor of $n$ and $v_p(n)=r$. By \cite[Cor.\ 2]{SKS2} there exist $u_p,v_p\in U(p)$ such that the sequence $(u_p,v_p)$ does not have any $Q_p$-weighted zero-sum subsequence. As the image of $U(p^r)$ under $f_{p^r,\,p}$ is $U(p)$, there exist $u'_p,v'_p\in U(p^r)$ such that the image of the sequence $(u'_p,v'_p)$ under $f_{p^r,\,p}$ is $(u_p,v_p)$.

By the Chinese remainder theorem, there exist $u,v\in U(n)$ such that for each prime divisor $p$ of $n$ if $n_p=p^{v_p(n)}$, then the image of the sequence $S=(u,v)$ under $f_{n,n_p}$ is $(u'_p,v'_p)$. It follows that the image of $S$ under $f_{n,\,p}$ is $(u_p,v_p)$ which is the same as the image of $(u'_p,v'_p)$ under the map $f_{n_p,\,p}$.
\end{proof}

\begin{lemma}\label{lboo}
Let $p$ be an odd prime and $r$ be a positive integer. Suppose $S=(v_1,v_2)$ is a sequence in $U(p^r)$ such that the image of $S$ under $f_{p^r,\,p}$ is not a $Q_p$-weighted zero-sum sequence. Then $S$ does not have any $S(p^r)$-weighted zero-sum subsequence.
\end{lemma}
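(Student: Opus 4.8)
The plan is to argue by contradiction, following the structure of the proof of Lemma \ref{les} but in the simpler two-element setting. Suppose $S=(v_1,v_2)$ does have an $S(p^r)^*$-weighted zero-sum subsequence $T$, and let $I=\{i:v_i\text{ is a term of }T\}$. Then for each $i\in I$ there exists $a_i\in S(p^r)^*$ with $\sum_{i\in I}a_iv_i=0$ in $\Z_{p^r}$. By Lemma \ref{sqpr} I can write $a_i=p^{2k_i}u_i$ with $2k_i\in[0,r-1]$ and $u_i\in U(p^r)^2$.

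First I would dispose of the case where $I$ is a singleton. If $I=\{i\}$, then $p^{2k_i}u_iv_i=0$; since $u_i$ and $v_i$ are units, their product is a unit, so this forces $p^{2k_i}=0$ in $\Z_{p^r}$, i.e.\ $2k_i\ge r$, contradicting $2k_i\le r-1$. Hence $I=\{1,2\}$ and I have $p^{2k_1}u_1v_1+p^{2k_2}u_2v_2=0$.

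The heart of the argument, and the step I expect to be the main obstacle (though it follows the pattern already established in Lemma \ref{les}), is to show that the two $p$-adic exponents coincide, i.e.\ $k_1=k_2$. Assuming without loss of generality $k_1<k_2$, I would factor out $p^{2k_1}$ to get $p^{2k_1}\big(u_1v_1+p^{2(k_2-k_1)}u_2v_2\big)=0$; since $2k_1\le r-1$, Lemma \ref{pr1} (equivalently, the divisibility $p^{r-2k_1}\mid(u_1v_1+p^{2(k_2-k_1)}u_2v_2)$ together with $r-2k_1\ge1$) gives $p\mid\big(u_1v_1+p^{2(k_2-k_1)}u_2v_2\big)$, and as $k_2-k_1\ge1$ the second term is divisible by $p$, so $p\mid u_1v_1$, contradicting that $u_1v_1$ is a unit. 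Thus $k_1=k_2=:k$ and, writing $l=2k\le r-1$, I obtain $p^l(u_1v_1+u_2v_2)=0$, hence $p^{r-l}\mid(u_1v_1+u_2v_2)$ with $r-l\ge1$, so $p\mid(u_1v_1+u_2v_2)$.

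Finally I would push everything down to $\Z_p$ via $f=f_{p^r,\,p}$: from $p\mid(u_1v_1+u_2v_2)$ I get $f(u_1)f(v_1)+f(u_2)f(v_2)=0$. Since $u_1,u_2\in U(p^r)^2$ and $f\big(U(p^r)^2\big)=U(p)^2=Q_p$ by Lemma 7 of \cite{SKS2}, the coefficients $f(u_1),f(u_2)$ lie in $Q_p$, so $\big(f(v_1),f(v_2)\big)$ is a $Q_p$-weighted zero-sum sequence. This contradicts the hypothesis, and hence $S$ has no $S(p^r)^*$-weighted zero-sum subsequence.
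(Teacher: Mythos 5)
Your proof is correct and follows essentially the same route as the paper: decompose the weights via Lemma \ref{sqpr}, rule out the length-one case using that the terms are units, force the two $p$-adic exponents to agree (the paper cites Lemma \ref{pr1} for this where you unpack the divisibility argument inline), and then reduce mod $p$ to contradict the hypothesis on $\big(f(v_1),f(v_2)\big)$. No gaps.
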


\begin{proof}
Suppose $S$ is an $S(p^r)$-weighted zero-sum sequence. Then there exist $a_1,a_2\in S(p^r)$ such that $a_1v_1+a_2v_2=0$. By Lemma \ref{sqpr} we see that there exist $u_1,u_2\in U(p^r)^2$ and even $r_1,r_2\in [0,r-1]$ such that $a_1=p^{r_1}u_1$ and $a_2=p^{r_2}u_2$. So we get that $p^{r_1}u_1v_1+p^{r_2}u_2v_2=0$. By Observation \ref{unique} we see that $r_1=r_2$ and so $p^{r_1}(u_1v_1+u_2v_2)=0$. As $r_1<r$, it follows that $p$ divides $u_1v_1+u_2v_2$. 

Let $f$ be the map $f_{p^r,\,p}$. We get that $f(u_1)f(v_1)+f(u_2)f(v_2)=0$. As $u_1,u_2\in U(p^r)^2$, it follows that $f(u_1),f(u_2)\in Q_p$. Thus, we get the contradiction that the image of $S$ under $f_{p^r,\,p}$ is a $Q_p$-weighted zero-sum sequence. Hence, it follows that $S$ is not an $S(p^r)$-weighted zero-sum sequence. As $v_1,v_2\in U(p^r)$, we see that $S$ does not have any $S(p^r)$-weighted zero-sum subsequence of length one.
\end{proof}

\begin{theorem}\label{lbo}
Let $n$ be an odd number. Then we have that $D_{S(n)}\geq 3$.
\end{theorem}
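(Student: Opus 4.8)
The plan is to prove $D_{S(n)^*}\geq 3$ by exhibiting a sequence of length $2$ in $\Z_n$ which has no $S(n)^*$-weighted zero-sum subsequence. Since $n$ is odd, I would use Proposition \ref{seq} to obtain a sequence $S=(u,v)$ in $U(n)$ such that for every prime divisor $p$ of $n$, the image of $S$ under $f_{n,\,p}$ has no $Q_p$-weighted zero-sum subsequence. I claim this sequence $S$ witnesses the lower bound.

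First I would rule out a length-one weighted zero-sum subsequence. Suppose $a\,u=0$ (or $a\,v=0$) for some $a\in S(n)^*$. Because $u$ is a unit this would force $a=0$, contradicting $a\in S(n)^*$ where the zero element is excluded. Hence neither singleton subsequence can be a weighted zero-sum, and the only candidate is the full sequence $(u,v)$.

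Next I would show $(u,v)$ itself has no $S(n)^*$-weighted zero-sum. Suppose for contradiction that $a_1u+a_2v=0$ with $a_1,a_2\in S(n)^*$. Since $a_1\neq 0$, by Observation \ref{crt'} there is a prime divisor $q$ of $n$ with $a_1^{(q)}\neq 0$; writing $r=v_q(n)$, the images under $f_{n,\,q^r}$ give an identity $a_1^{(q)}u^{(q)}+a_2^{(q)}v^{(q)}=0$ in $\Z_{q^r}$. Here $u^{(q)},v^{(q)}\in U(q^r)$ since $u,v\in U(n)$, and by Observation \ref{crt'} we have $a_1^{(q)},a_2^{(q)}\in S(q^r)$; moreover the nonzero entry $a_1^{(q)}$ makes this a genuine $S(q^r)^*$-weighted relation on $(u^{(q)},v^{(q)})$. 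But the image of $(u^{(q)},v^{(q)})$ under $f_{q^r,\,q}$ is exactly the image of $S$ under $f_{n,\,q}$, which by construction is not a $Q_q$-weighted zero-sum sequence. Applying Lemma \ref{lboo} to $(u^{(q)},v^{(q)})$ in $U(q^r)$ yields a contradiction.

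The one point requiring care — which I expect to be the main subtlety rather than a genuine obstacle — is matching up the data so that Lemma \ref{lboo} applies cleanly: I must verify that the coefficient $a_2^{(q)}$, which could be zero, still fits the hypothesis, and that the ``no $Q_q$-weighted zero-sum subsequence'' property of the mod-$q$ image of $S$ transports correctly through the factorization $f_{n,\,q}=f_{q^r,\,q}\circ f_{n,\,q^r}$. If $a_2^{(q)}=0$ the relation reduces to $a_1^{(q)}u^{(q)}=0$ with $u^{(q)}$ a unit, again impossible; otherwise both coefficients are nonzero and Lemma \ref{lboo} directly forbids an $S(q^r)^*$-weighted zero-sum of $(u^{(q)},v^{(q)})$. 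Either way we reach a contradiction, so $S$ has no $S(n)^*$-weighted zero-sum subsequence, giving $D_{S(n)^*}\geq 3$.
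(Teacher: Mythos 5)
Your proposal is correct and follows essentially the same route as the paper: construct $S=(u,v)$ via Proposition \ref{seq}, rule out singletons since the terms are units, then project to $\Z_{q^r}$ for a prime $q$ with $a_1^{(q)}\neq 0$ and invoke Lemma \ref{lboo}. Your explicit handling of the case $a_2^{(q)}=0$ is a point the paper leaves implicit (Lemma \ref{lboo} already excludes length-one weighted zero-sum subsequences of unit terms), but the argument is the same.
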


\begin{proof}
By Proposition \ref{seq} there exists a sequence $S=(u,v)$ in $U(n)$ such that for each prime divisor $p$ of $n$, the image of $S$ under $f_{n,\,p}$ does not have any $Q_p$-weighted zero-sum subsequence. Suppose $T$ is an $S(n)$-weighted zero-sum subsequence of $S$. As the terms of $S$ are in $U(n)$, we see that $T$ must be $S$. Thus, there exist $a,b\in S(n)$ such that $au+bv=0$. As $a\neq 0$, there exists a prime divisor $p$ of $n$ such that $a^{(p)}\neq 0$. Let $k=v_p(n)$ and $(u_p,v_p)$ be the image of $S$ under $f_{n,\,p^k}$. 

It follows that the sequence $(u_p,v_p)$ in $\mathbb Z_{p^k}$ has an $S(p^k)$-weighted zero-sum subsequence. As the image of $S=(u,v)$ under $f_{n,\,p}$ is the same as the image of $(u_p,v_p)$ under $f_{p^k,\,p}$, it follows that $(u_p,v_p)$ is a sequence in $\mathbb Z_{p^k}$ whose image under $f_{p^k,\,p}$ does not have any $Q_p$-weighted zero-sum subsequence. So by Lemma \ref{lboo} we get the contradiction that the sequence $(u_p,v_p)$ does not have any $S(p^k)$-weighted zero-sum subsequence.

Thus, we see that $S$ does not have any $S(n)$-weighted zero-sum subsequence. Hence, it follows that $D_{S(n)}\geq 3$.  
\end{proof}

\begin{theorem}\label{lbe2o}
We have that $D_{S(n)}\geq 3$ when $v_2(n)$ is even and at least two.
\end{theorem}

\begin{proof}
By two results by Mondal et al.\ \cite[Cor.\ 2, Lem.\ 7]{SKS2} and by the Chinese remainder theorem, we can find a sequence $S=(v_1,v_2)$ in $U(n)$ by a similar method as in Proposition \ref{seq} such that for each odd prime divisor $p$ of $n$ the sequence the image of $S$ under $f_{n,\,p}$ is not a $Q_p$-weighted zero-sum sequence and the image of $S$ under $f_{n,\,4}$ is $(1,1)$.

Suppose $T$ is an $S(n)$-weighted zero-sum subsequence of $S$. As the terms of $S$ are in $U(n)$, we see that $T$ must be $S$. Thus, there exists $a,b\in S(n)$ such that $au+bv=0$. As $a\neq 0$, there is a prime divisor $q$ of $n$ such that $a^{(q)}\neq 0$. We now use a similar argument as in the proof of Theorem \ref{lbo}, where we use Lemma \ref{les2} in addition to Lemma \ref{lboo}.
\end{proof}

\begin{theorem}\label{ubo'}
Let $p$ be an odd prime and $r$ be odd. Then we have $C_{S(p^r)}\leq 3$.
\end{theorem}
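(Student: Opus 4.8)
The plan is to transport the problem from $\Z_{p^r}$ down to $\Z_p$ via Lemma \ref{q2a}, and then to settle the resulting question with a short character-sum estimate. Since $r$ is odd, Lemma \ref{q2a} tells us that if $T$ is a subsequence of a sequence in $\Z_{p^r}$ whose image under $f_{p^r,\,p}$ is a $U(p)^2$-weighted zero-sum sequence, then $T$ is itself an $S(p^r)^*$-weighted zero-sum sequence. As $U(p)^2=Q_p$, it therefore suffices to prove $C_{Q_p}\le 3$: given any length-$3$ sequence $S$ in $\Z_{p^r}$, I would apply this bound to the image $S'=f_{p^r,\,p}(S)$ in $\Z_p$ to extract a consecutive subsequence $T'$ of $S'$ that is $Q_p$-weighted zero-sum, and then the corresponding consecutive subsequence $T$ of $S$ (which maps onto $T'$ under $f_{p^r,\,p}$) is $S(p^r)^*$-weighted zero-sum by Lemma \ref{q2a}. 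This yields $C_{S(p^r)^*}\le 3$.

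To prove $C_{Q_p}\le 3$, take a sequence $(x_1',x_2',x_3')$ in $\Z_p$. If some term is $0$, that one-term consecutive subsequence is already $Q_p$-weighted zero-sum, so I may assume all three terms are nonzero. Because $Q_p$ is a subgroup of index $2$ in $U(p)$, dividing a relation $q_1x_1'+q_2x_2'+q_3x_3'=0$ by $q_3$ shows that the full (consecutive) triple is $Q_p$-weighted zero-sum exactly when $-x_3'\in x_1'Q_p+x_2'Q_p$, i.e. when $-x_3'$ lies in the sumset of the two $Q_p$-cosets determined by $x_1'$ and $x_2'$. Thus the claim for the full triple reduces to showing that this sumset exhausts $\Z_p\setminus\{0\}$.

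The main obstacle is this sumset statement, which I would establish by counting. Writing $\chi$ for the quadratic character modulo $p$ and using the coset indicator $\tfrac12\big(1+\chi(\alpha)\chi(x)\big)$, the number $N$ of representations of a fixed nonzero $s$ as $x+y$ with $x\in x_1'Q_p$ and $y\in x_2'Q_p$ evaluates to
\[
N=\tfrac14\Big[(p-2)-\chi(s)\big(\chi(x_1')+\chi(x_2')\big)-\chi(-1)\,\chi(x_1')\chi(x_2')\Big],
\]
where the correction terms come from $\sum_{t\neq 0}\chi(t)=0$ and the standard evaluation $\sum_{t}\chi(t)\chi(s-t)=-\chi(-1)$ for $s\neq 0$. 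Since each correction has absolute value $1$, we get $N\ge (p-5)/4$, which is positive for every prime $p\ge 7$; hence the full triple is $Q_p$-weighted zero-sum whenever $p\ge 7$. It then remains to treat the two small primes by inspection. For $p=3$ we have $Q_3=\{1\}$, so $Q_3$-weighted zero-sum coincides with ordinary zero-sum and $C_{Q_3}\le 3$ follows from Remark \ref{c}. For $p=5$ the estimate degenerates, so I would argue directly: here $Q_5=\{1,4\}=\{\pm1\}$, a nonzero pair $(a,b)$ is $Q_5$-weighted zero-sum exactly when $a=\pm b$, and a short case analysis on the quadratic-residue types of $x_1',x_2',x_3'$ shows that whenever neither consecutive pair $(x_1',x_2')$ nor $(x_2',x_3')$ works, the two relevant cosets are distinct and their sumset is all of $\Z_5\setminus\{0\}$, so the full triple works. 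Combining the cases gives $C_{Q_p}\le 3$, and hence the theorem.
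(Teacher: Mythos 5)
Your proof is correct, and its first step—reducing to the bound $C_{Q_p}\leq 3$ via Lemma \ref{q2a} applied to the image under $f_{p^r,\,p}$—is exactly the reduction the paper makes. Where you diverge is in how that bound is obtained: the paper simply cites Theorem 5 of \cite{SKS} for $C_{Q_p}=3$, whereas you prove $C_{Q_p}\leq 3$ from scratch. Your argument is sound: the reformulation of the full-triple condition as $-x_3'\in x_1'Q_p+x_2'Q_p$ is valid because $Q_p$ is a multiplicative group, the character-sum evaluation of $N$ is correct (I checked the cross terms $-\chi(s)\big(\chi(x_1')+\chi(x_2')\big)$ and the Jacobi-type term $-\chi(-1)\chi(x_1')\chi(x_2')$), and the lower bound $(p-5)/4$ forces $N\geq 1$ for $p\geq 7$ since $N$ is an integer. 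The hand checks for $p=3$ (where $Q_3=\{1\}$ and Remark \ref{c} gives $C_{\{1\}}(\Z_3)\leq 3$) and $p=5$ (where failure of the pair $(x_1',x_2')$ forces the two cosets $\{1,4\}$ and $\{2,3\}$ to be distinct, and their sumset is all of $\Z_5\setminus\{0\}$) close the remaining cases. What your route buys is self-containedness—the reader need not consult \cite{SKS}—at the cost of a page of elementary analytic number theory; note also that for $p\geq 7$ your count shows the stronger fact that any three nonzero terms already form a $Q_p$-weighted zero-sum sequence without using consecutiveness at all, which is essentially the content of Lemma \ref{cm'} quoted later in the paper.
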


\begin{proof}
Let $S=(x,y,z)$ be a sequence in $\mathbb Z_{p^r}$ and let $S'$ be the image of $S$ under $f_{p^r,\,p}$. Mondal et al.\ \cite[Thm.\ 5]{SKS} showed that for an odd prime $p$ we have $C_{Q_p}=3$. Thus, we can find a subsequence $T$ whose terms are consecutive terms of $S$ such that the image of $T$ under $f_{p^r,\,p}$ is a $Q_p$-weighted zero-sum subsequence of $S'$. So by Lemma \ref{q2a} we see that $T$ is an $S(p^r)$-weighted zero-sum sequence. Hence, it follows that $C_{S(p^r)}\leq 3$.
\end{proof}

\begin{corollary}
Suppose $n$ is not a square and $v_2(n)$ is a non-negative, even integer. Then we have that $D_{S(n)}=C_{S(n)}=3$.
\end{corollary}

\begin{proof}
By Theorem \ref{lbe2o} we have $D_{S(n)}\geq 3$. From the assumptions on $n$, we see that there is an odd prime divisor $p$ of $n$ such that $v_p(n)$ is odd. Thus, by Corollary \ref{pr} and Theorem \ref{ubo'} we have that $C_{S(n)}\leq 3$. As $D_A(n)\leq C_A(n)$ for every $A\subseteq \mathbb Z_n$, it follows that $D_{S(n)}=C_{S(n)}=3$.
\end{proof}

\begin{theorem}\label{ubo2'}
We have that $C_{S(2^r)}\leq 2$ where $r$ is an odd number.  
\end{theorem}

\begin{proof}
Let $S=(x,y)$ be a sequence in $\mathbb Z_{2^r}$ and $S'=(x',y')$ be the image of $S$ under $f_{2^r,\,2}$. We can find a subsequence $T$ of $S$ such that the image of $T$ under $f_{2^r,\,2}$ is a zero-sum sequence. So by Lemma \ref{q2a} we see that $T$ is an $S(2^r)$-weighted zero-sum sequence. Hence, it follows that $C_{S(2^r)}\leq 2$.
\end{proof}

\begin{corollary} 
Suppose $n$ is an even positive integer such that $v_2(n)$ is odd. Then we have that $D_{S(n)}=C_{S(n)}=2$. 
\end{corollary}

\begin{proof}
It is easy to see that $D_{S(n)}\geq 2$. From Corollary \ref{pr} and Theorem \ref{ubo2'}, we get that $C_{S(n)}\leq 2$. For every $A\subseteq \mathbb Z_n$ as $D_A(n)\leq C_A(n)$, it follows that $D_{S(n)}=C_{S(n)}=2$.
\end{proof}

\section{\texorpdfstring{$D_{S(n)}$}{} when \texorpdfstring{$n$}{} is an odd square}

\begin{lemma}\label{lbee}
Let $p$ be a prime and $r$ be a non-zero even number. Suppose $(w_1,w_2)$ is a sequence in $U(p^r)$ whose image under $f_{p^r,\,p}$ is not a $Q_p$-weighted zero-sum sequence. Let $S=(u\,w_1,u\,w_2,p\,w_1,p\,w_2)$ where $u\in U(p^r)$. Then the sequence $S$ in $\mathbb Z_{p^r}$ does not have any $S(p^r)$-weighted zero-sum subsequence.
\end{lemma}

\begin{proof}
Suppose $T$ is an $S(p^r)$-weighted zero-sum subsequence of $S=(u\,w_1,u\,w_2,p\,w_1,p\,w_2)$. Let $I=\{i\in [1,2]:u\,w_i~\textrm{is a term of}~T\}$ and $J=\{j\in [1,2]:p\,w_j~\textrm{is a term of}~T\}$. As $T$ is an $S(p^r)$-weighted zero-sum sequence, for each $i\in I$ there exists $a_i\in S(p^r)$ and for each $j\in J$ there exists $b_j\in S(p^r)$ such that $\sum_{i\in I}a_iu\,w_i+\sum_{j\in J}b_j\,p\,w_j=0$. From Lemma \ref{sqpr}, for each $i\in I$ we have $a_i=p^{r_i}u_i$ for some even $r_i<r$ and $u_i\in U(p^r)^2$ and for each $j\in J$ we have $b_j=p^{s_j}v_j$ for some even $s_j<r$ and $v_j\in U(p^r)^2$. So we have
\begin{equation}\label{zs}
u\sum_{i\in I}p^{r_i}u_i\,w_i+\sum_{j\in J}p^{s_j+1}v_jw_j=0.
\end{equation}

Consider the set $L=\{r_i:i\in I\}\cup\{s_j+1:j\in J\}$. Let $r'$ be the minimum of $L$. As $r$ is even and $s_j$ is even for each $j\in J$, it follows that $r'\leq r-1$. Observe that $\{r_i:i\in I\}\cap \{s_j+1:j\in J\}=\emptyset$ as the $r_i$'s and $s_j$'s are even. Suppose there exists $i\in I$ such that $r_i=r'$. We claim that $I=\{1,2\}$ and $r_1=r_2=r'$. If not, from (\ref{zs}) we get that $p^{r'+1}$ divides $p^{r'}w$ where $w$ is a unit. As $r'\leq r-1$ we get the contradiction that $p$ divides $w$. By a similar argument if there exists $j\in J$ such that $s_j+1=r'$, then $J=\{1,2\}$ and $s_1+1=s_2+1=r'$.

Suppose $I=\{1,2\}$ and $r_1=r_2=r'$. As $r'\leq r-1$, from (\ref{zs}) we see that $p$ divides $u\,(u_1w_1+u_2w_2)$. As $u\in U(p^r)$, it follows that $f(u)\in U(p)$ where $f=f_{p^r,\,p}$. So we get that $f(u_1)f(w_1)+f(u_2)f(w_2)=0$. As $u_1,u_2\in U(p^r)^2$, it follows that $f(u_1),f(u_2)\in Q_p$. Thus, we get the contradiction that the image of the sequence $(w_1,w_2)$ under $f_{p^r,\,p}$ is a $Q_p$-weighted zero-sum sequence. We will get the same contradiction if $J=\{1,2\}$ and $s_1+1=s_2+1=r'$. Thus, it follows that $S$ does not have any $S(p^r)$-weighted zero-sum subsequence.
\end{proof}

\begin{theorem}\label{lbe}
Let $n$ be an odd square. Then we have that $D_{S(n)}\geq 5$. 
\end{theorem}

\begin{proof}
Let $m$ be the radical of $n$, i.e., the largest squarefree divisor of $n$. By Proposition \ref{seq} there exists a sequence $(u,v)$ in $U(n)$ such that for each prime divisor $p$ of $n$, the image of the sequence $(u,v)$ under $f_{n,\,p}$ does not have any $Q_p$-weighted zero-sum subsequence. Let $S=(u,v,mu,mv)$. We claim that the sequence $S$ in $\mathbb Z_n$ does not have any $S(n)$-weighted zero-sum subsequence from which it follows that $D_{S(n)}\geq 5$.

Suppose $T$ is an $S(n)$-weighted zero-sum subsequence of $S$. Let $x$ be a term of $T$ and $a\in S(n)$ be the coefficient of $x$ in an $S(n)$-weighted zero-sum which we obtain from $T$. As $a\neq 0$, there exists a prime divisor $p$ of $n$ such that $a^{(p)}\neq 0$. Let $r=v_p(n)$. It follows that the sequence $S^{(p)}$ has an $S(p^r)$-weighted zero-sum subsequence.

The image of the sequence $(u^{(p)},v^{(p)})$ under $f_{p^r,\,p}$ does not have any $Q_p$-weighted zero-sum subsequence. As $m$ is the largest squarefree divisor of $n$, it follows that $m^{(p)}=p\,w$ where $w\in U(p^r)$. It follows that $S^{(p)}$ is a sequence in $\mathbb Z_{p^r}$ which has the form as in the statement of Lemma \ref{lbee}. As $n$ is a square, we see that $r$ is a non-zero even number. So by Lemma \ref{lbee} we arrive at the contradiction that the sequence $S^{(p)}$ does not have any $S(p^r)$-weighted zero-sum subsequence. Hence, our claim must be true.
\end{proof}

We get the next result from the proof of \cite[Thm.\ 7]{ADU}.

\begin{lemma}\label{cm3}
Let $p$ be an odd prime, $r$ be a positive integer, and $A= U(p^r)^2$. Suppose we are given $y_1,y_2,y_3\in U(p^r)$. Then we have that \[Ay_1+(Ay_2\cup\{0\}\,)+(Ay_3\cup\{0\}\,)=\mathbb Z_{p^r}.\]
\end{lemma}

\begin{theorem}\label{lbd}
Let $r$ be a non-zero even number and $p$ be an odd prime. Then we have that $D_{S(p^r)}\leq 5$. 
\end{theorem}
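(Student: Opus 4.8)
The plan is to show that every sequence $S=(x_1,\dots,x_5)$ of five elements of $\Z_{p^r}$ admits an $S(p^r)^*$-weighted zero-sum subsequence. The three ingredients I would lean on are Corollary \ref{unique} (the unique valuation decomposition $x=p^k u$), Lemma \ref{sqpr} (the description of $S(p^r)^*$ as the union of the cosets $p^{2k}U(p^r)^2$), and above all Lemma \ref{cm3}, which says that three units in $\Z_{p^s}$, weighted by unit squares and with the last two optionally omitted, already cover all of $\Z_{p^s}$, and in particular hit $0$. The strategy is to reduce, by valuation bookkeeping, to an instance where Lemma \ref{cm3} applies to three of the five terms.

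First I would dispose of terms of large valuation. Since $r\ge 2$ is even, the element $p^{r-2}=\big(p^{(r-2)/2}\big)^2$ is a nonzero square, so $p^{r-2}\in S(p^r)^*$. If some term $x_i$ equals $0$ or satisfies $v_p(x_i)\ge 2$, then $p^{r-2}x_i=0$ in $\Z_{p^r}$, so the length-one subsequence $(x_i)$ is already an $S(p^r)^*$-weighted zero-sum. Hence I may assume every $x_i$ is nonzero with $v_p(x_i)\in\{0,1\}$, that is, each $x_i$ is either a unit or $p$ times a unit.

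Now a pigeonhole step: among the five terms, at least three share a common valuation $l\in\{0,1\}$. Relabel these three as $p^l w_1,p^l w_2,p^l w_3$ with $w_1,w_2,w_3\in U(p^r)$. I would look for weights of the form $s_i\in U(p^r)^2\su S(p^r)^*$; then $s_i\,p^l w_i=p^l(s_i w_i)$, and a subsum vanishes in $\Z_{p^r}$ exactly when the corresponding subsum of the $s_i w_i$ vanishes modulo $p^{r-l}$. Passing to $\Z_{p^{r-l}}$ (legitimate since $r-l\ge 1$) and writing $\bar w_i$ for the images of the $w_i$, Lemma \ref{cm3} gives that $0$ lies in $U(p^{r-l})^2\,\bar w_1+\big(U(p^{r-l})^2\,\bar w_2\cup\{0\}\big)+\big(U(p^{r-l})^2\,\bar w_3\cup\{0\}\big)$. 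Since $f_{p^r,\,p^{r-l}}\big(U(p^r)^2\big)=U(p^{r-l})^2$ by Lemma 7 of \cite{SKS2}, I can lift the unit-square coefficients occurring in this representation back to $U(p^r)^2$, the omitted terms corresponding to dropping those $x_i$ from the subsequence. This produces square-unit weights $s_i$ with $\sum s_i\,p^l w_i=0$; the coefficient of the first term is present, so the subsequence is nonempty, and it is the desired $S(p^r)^*$-weighted zero-sum.

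I expect no serious obstacle here: the content is carried almost entirely by Lemma \ref{cm3}, and the only real work is the valuation bookkeeping that lets me apply it to three equal-valuation terms. Two points deserve a line of care. First, the hypothesis that $r$ is even is exactly what makes $p^{r-2}$ a square, and hence a legitimate weight, in the high-valuation reduction (for odd $r$ one would instead use $p^{r-1}$, as in Lemma \ref{q2a}). Second, one should check that the vanishing subsum coming from Lemma \ref{cm3} is genuinely nonempty, which holds because the coefficient of the first unit is forced to be a nonzero square. As an alternative packaging, one could instead invoke Corollary \ref{u2s} to reduce immediately to the case $r=2$ and prove $D_{U(p^2)^2}\le 5$ directly; the argument is the same one, run in $\Z_{p^2}$ and $\Z_p$.
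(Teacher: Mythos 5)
Your proposal is correct and follows the same route as the paper: eliminate any term divisible by $p^2$ using the weight $p^{r-2}\in S(p^r)^*$, pigeonhole to find three terms of equal valuation $l\in\{0,1\}$, and finish with Lemma \ref{cm3}. The only (harmless) difference is that you pass to $\Z_{p^{r-l}}$ and lift, whereas one can simply apply Lemma \ref{cm3} to the units $w_1,w_2,w_3$ directly in $\Z_{p^r}$ and multiply the resulting zero-sum identity by $p^l$.
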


\begin{proof}
Suppose $S=(x_1,\ldots,x_5)$ is a sequence in $\mathbb Z_{p^r}$. If $p^2$ divides some term $x_i$ of $S$, then $p^{r-2}x_i=0$. As $r$ is even, we see that $p^{r-2}=(p^{(r-2)/2})^2\in S(n)$. So we see that $(x_i)$ is an $S(p^r)$-weighted zero-sum subsequence of $S$ of length one. Thus, we may assume that $p^2$ does not divide any term of $S$.

It follows that each term of $S$ is either a unit or a multiple of $p$ by a unit. If at least three terms of $S$ are units or at least three terms of $S$ are of the form $pu$ where $u$ is a unit, by using Lemma \ref{cm3} we get an $S(p^r)$-weighted zero-sum subsequence of $S$. Thus, we see that $D_{S(p^r)}\leq 5$.
\end{proof}

\begin{corollary}
Let $n$ be an odd square. Then we have that $D_{S(n)}=5$.  
\end{corollary}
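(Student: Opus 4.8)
The plan is to prove the two inequalities $D_{S(n)^*}\geq 5$ and $D_{S(n)^*}\leq 5$ separately. The lower bound requires no new work: since $n$ is an odd square, Theorem \ref{lbe} gives $D_{S(n)^*}\geq 5$ directly. So the entire task reduces to establishing the upper bound $D_{S(n)^*}\leq 5$.

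For the upper bound, the first step is to record the $D$-analogue of Corollary \ref{pr}, namely that for any prime divisor $p$ of $n$ with $m=p^{v_p(n)}$ we have $D_{S(n)^*}\leq D_{S(m)^*}$. The argument parallels Corollary \ref{pr} but is in fact simpler, since the Davenport constant imposes no consecutivity condition on the subsequence. Given any sequence $S$ in $\Z_n$ of length $D_{S(m)^*}$, its image $S^{(p)}$ is a sequence in $\Z_m$ of the same length, and hence by the definition of $D_{S(m)^*}$ it has an $S(m)^*$-weighted zero-sum subsequence. This subsequence is of the form $T^{(p)}$ for a subsequence $T$ of $S$, and applying Lemma \ref{crt} to $T$ (rather than to all of $S$) shows that $T$ itself is an $S(n)^*$-weighted zero-sum subsequence of $S$. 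Thus $D_{S(n)^*}\leq D_{S(m)^*}$.

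The second step is to choose a convenient prime and invoke the prime-power bound. Fix any prime divisor $p$ of $n$ and set $r=v_p(n)$. Because $n$ is a perfect square, every exponent in its factorization is even, so $r$ is even with $r\geq 2$; because $n$ is odd, $p$ is an odd prime. Hence Theorem \ref{lbd} applies and yields $D_{S(p^r)^*}\leq 5$. Combining this with the reduction from the previous step gives $D_{S(n)^*}\leq D_{S(p^r)^*}\leq 5$, and together with the lower bound from Theorem \ref{lbe} we conclude $D_{S(n)^*}=5$.

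I do not expect any genuine obstacle here, since the substantive content has already been carried out in Theorem \ref{lbe} (the lower bound, via the obstruction sequence of Lemma \ref{lbee}) and in Theorem \ref{lbd} (the prime-power upper bound, via Lemma \ref{cm3}). The only point demanding care is the multiplicative reduction: one must apply Lemma \ref{crt} to the zero-sum subsequence $T$ and not to the full sequence $S$, and keep the length bookkeeping straight so that the bound transfers in the correct direction. This is the mildest of the steps and is essentially a repackaging of Lemma \ref{crt} for the $D$ constant.
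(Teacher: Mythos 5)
Your proof is correct and follows essentially the same route as the paper: the lower bound is quoted from Theorem \ref{lbe}, and the upper bound comes from Theorem \ref{lbd} combined with the reduction to a prime power. You are in fact slightly more careful than the paper, which cites Corollary \ref{pr} (stated only for $C$) at this point; your explicit $D$-analogue via Lemma \ref{crt} is exactly the intended, and correct, way to fill that small gap.
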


\begin{proof}
From Theorem \ref{lbe} we see that $D_{S(n)}\geq 5$ when $n$ is an odd square. Since every prime divisor $p$ of $n$ is odd and $v_p(n)$ is even, by using Corollary \ref{pr} and Theorem \ref{lbd} we see that $D_{S(n)}\leq 5$. Thus, it follows that $D_{S(n)}=5$.    
\end{proof}

\section{\texorpdfstring{$C_{S(n)}$}{} when \texorpdfstring{$n$}{} is an odd square}

\noindent
{\bf Notation:} 

If $T$ is a subsequence of $S$, then $S-T$ denotes the subsequence which is obtained by removing the terms of $T$ from $S$. The concatenation of the sequences $S-T$ and $T$ gives us a sequence whose terms are a permutation of the terms of the sequence $S$.

If $S$ is a sequence in $\mathbb Z_n$ and $d\in \mathbb Z_n$ such that all the terms of $S$ are divisible by $d$, then $S/d$ denotes the sequence in $\mathbb Z_n$ whose terms are obtained by dividing the corresponding terms of $S$ by $d$.

\begin{theorem}\label{25}
We have that $C_{U(25)^2}=9$. 
\end{theorem}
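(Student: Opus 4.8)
The plan is to reduce the statement to a short combinatorial problem about the residues of the terms modulo $5$, after isolating the single place where genuine information about $\Z_{25}$ (as opposed to $\Z_5$) is needed. Let $S=(x_1,\dots,x_9)$ be a sequence in $\Z_{25}$. If some $x_i=0$ we are done with a length-one subsequence, so I may assume every term is nonzero; then, since $v_5$ of a nonzero element of $\Z_{25}$ is $0$ or $1$, each term is either a unit (a \emph{U-term}) or of the form $5u$ with $u$ a unit (an \emph{F-term}). For each term I record the set $U(25)^2 x_i$ of values attainable by an allowed weight. Two structural facts drive everything. For a U-term, $U(25)^2 x_i$ is a coset of the index-$2$ subgroup $U(25)^2$, consisting of all ten units lying over the two residues $\{s,4s\}\pmod 5$ where $s=\overline{x_i}$; in particular it contains all five lifts of each of these residues, so $U(25)^2 x_i-U(25)^2 x_i\su 5\Z_{25}$. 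For an F-term, $U(25)^2 x_i$ is $\{5,20\}$ or $\{10,15\}$, both projecting to $0\pmod 5$.

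The crux is a \emph{lifting principle}, which I would establish first: if a consecutive window contains at least one U-term and admits a choice of attainable values whose sum is $\equiv 0\pmod 5$, then it contains an actual $U(25)^2$-weighted zero-sum. Indeed that sum then lies in $5\Z_{25}$, and because the chosen U-term's attainable set contains all five lifts of its residue, I may decrease that one term by the offending multiple of $5$ (staying in its set, since the residue is unchanged) to reach total $0$. The point is that a single U-term absorbs any element of $5\Z_{25}$, decoupling the mod-$5$ condition from the mod-$25$ condition. Since $f_{25,5}\big(U(25)^2\big)=Q_5$ (Lemma 7 of \cite{SKS2}), modulo $5$ the U-terms present exactly a $Q_5$-weighting problem in $\Z_5$ while the F-terms contribute $0$. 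Hence a window with at least one U-term works precisely when the block of U-characters it contains admits signs with $\sum_k\epsilon_k c_k\equiv 0\pmod 5$, where $c_k=1$ if $\overline{x_k}$ is a square mod $5$ and $c_k=2$ otherwise. The U-terms lying in a position-window form a consecutive block of the subsequence of U-terms, and conversely every consecutive block of U-terms is the U-part of some window, so this is genuinely a question about the \emph{character string} of the U-terms.

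Let $m$ be the number of U-terms. If $m\geq 3$, I would examine the first three U-terms and check the finite sign problem directly: any three consecutive characters contain a solvable sub-block. If two adjacent characters agree, that pair gives $\pm c\pm c$, which hits $0$; the only remaining patterns are $1,2,1$ and $2,1,2$, solved by $1-2+1=0$ and $2+1+2\equiv 0\pmod 5$. The lifting principle then finishes this case. If $m\leq 2$, there are at least seven F-terms occupying at most $m+1\leq 3$ maximal runs, so by pigeonhole some run of F-terms has length $\geq 3$; dividing such a run by $5$ yields a length-$\geq 3$ sequence of units in $\Z_5$, and since $C_{Q_5}=3$ (Theorem 5 of \cite{SKS}) it has a consecutive $Q_5$-weighted zero-sum, which multiplied back by $5$ is a $U(25)^2$-weighted zero-sum of consecutive F-terms (lifting each $Q_5$-weight into $U(25)^2$ via $f_{25,5}$). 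In every case $S$ has a consecutive $U(25)^2$-weighted zero-sum, giving $C_{U(25)^2}\leq 9$; tracing the obstructions shows a sequence with no such subsequence must have $m\leq 2$ with distinct characters and all F-runs of length $\leq 2$ with distinct characters, hence length $\leq 8$, so the bound $9$ is what this argument yields.

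The main obstacle I anticipate is not any single hard step but getting the bookkeeping exactly right: verifying the lifting principle rigorously, confirming that mixing U-terms and F-terms inside one window is harmless (the F-contributions, being multiples of $5$, are swallowed by the absorbing U-term), and checking that a position-window can always be chosen to realize any prescribed consecutive block of U-terms together with the absorbing term. The remaining content is the small finite character computation for three consecutive U-terms and the elementary pigeonhole count for the $m\leq 2$ case, both routine once the reduction is set up.
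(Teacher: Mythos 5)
Your proof is correct, and it takes a genuinely different route from the paper's. The paper splits on the number of units being at least four, at most two, or exactly three: the first case is dispatched by Lemma 2 of \cite{CM} (four units in $\Z_{25}$ already force a $U(25)^2$-weighted zero-sum), the second by locating a block of three consecutive non-units and using $C_{Q_5}=3$, and the third by a delicate concatenation argument in which a $Q_5$-weighted zero-sum of the three units produces a partial sum $5k$ that is then cancelled by the surrounding non-units, again via Lemma 2 of \cite{CM}. You instead split at three units: for $m\ge 3$ you solve a small sign problem over $\{1,2\}$ modulo $5$ for the characters of the first three unit terms and then invoke your absorption principle (a single unit term, whose attainable set $U(25)^2x$ is a full union of fibres of $f_{25,5}$, swallows any element of $5\Z_{25}$, so intervening non-unit terms are harmless); for $m\le 2$ you use pigeonhole on the at most $m+1\le 3$ maximal runs of non-units to find a run of length three and finish with $C_{Q_5}=3$. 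Your route avoids Lemma 2 of \cite{CM} entirely, treats the paper's troublesome ``exactly three units'' case uniformly with the rest, and makes the threshold $9$ transparent; the paper's route is shorter where it can quote \cite{CM}, and its rigid block decomposition $S_1,S_2,S_3$ hands it consecutiveness for free. One small correction: the asserted inclusion $U(25)^2x_i-U(25)^2x_i\su 5\Z_{25}$ for a unit term $x_i$ is false (take weights $1$ and $4$: their difference is not divisible by $5$), since the coset covers two residues modulo $5$. The statement you actually use in the lifting principle, and which is true, is that $U(25)^2x_i$ contains every lift of each residue it meets, i.e.\ $U(25)^2x_i+5\Z_{25}=U(25)^2x_i$; so the argument is unaffected.
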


\begin{proof}
Let $S=(x_1,\ldots,x_9)$ be a sequence in $\mathbb Z_{25}$. We may assume that all the terms of $S$ are non-zero.

Suppose at least four terms of $S$ are units. From \cite[Lem.\ 2]{CM} it follows that $S$ is a $U(25)^2$-weighted zero-sum sequence. Let $S_1=(x_1,x_2,x_3)$, $S_2=(x_4,x_5,x_6)$, and $S_3=(x_7,x_8,x_9)$.

Suppose at most two terms of $S$ are units. Then we see that there exists $i\in [1,3]$ such that all the terms of $S_i$ are divisible by 5. Let $S_i'$ denote the sequence in $\mathbb Z_5$ which is the image of $S_i/5$ under $f_{25,5}$. From \cite[Thm.\ 4]{SKS} we have that $C_{Q_5}=3$. Thus, the sequence $S_i'$ has a $Q_5$-weighted zero-sum subsequence having consecutive terms. By \cite[Lem.\ 5]{SKS} it follows that the sequence $S_i$ (and hence the sequence $S$) has a $U(25)^2$-weighted zero-sum subsequence having consecutive terms.

So we may assume that exactly three terms of $S$ are units. If at least three consecutive terms of $S$ are non-units, by a similar argument as in the previous paragraph we get a $U(25)^2$-weighted zero-sum subsequence of $S$ having consecutive terms. So it follows that for each $i\in [1,3]$ there is exactly one term $y_i$ in the sequence $S_i$ which is a unit.

As $C_{Q_5}=3$ we see that the sequence $(y_1,y_2,y_3)$ has a subsequence $S_4$ having consecutive terms whose image $S_4'$ under $f_{25,5}$ is a $Q_5$-weighted zero-sum sequence. As $f_{25,5}$ is onto, it follows that there exists $k\in \mathbb Z_{25}$ such that a $U(25)^2$-weighted sum of the terms of $S_4$ is $-5k$. We will use this observation a bit later in this proof.

Let $J=\{i\in [1,3]:y_i$ is a term of $S_4\}$. Let $T$ be the subsequence of $S$ which is the concatenation of the sequences $S_i$ such that $i\in J$. It follows that $T$ is a subsequence of $S$ having consecutive terms. We claim that $T$ is a $U(25)^2$-weighted zero-sum sequence. Let $T_1=T-S_4$. As all the terms of $S$ are non-zero, all the terms of $T_1$ are of the form $5u$ where $u\in U(25)$. 

Let $T_1'$ denote the image of $T_1/5$ under $f_{25,5}$. Chintamani and Moriya \cite[Lem.\ 2]{CM} showed that $f_{25,5}(k)\in\mathbb Z_5$ is a $Q_5$-weighted sum of the terms of $T_1'$. Mondal et al.\ \cite[Lem.\ 5]{SKS} showed that $5k$ is a $U(25)^2$-weighted sum of the terms of $T_1$. As we have seen that $-5k$ is a $U(25)^2$-weighted sum of the terms of $S_4$, it follows that $T$ is a $U(25)^2$-weighted zero-sum sequence.

Thus, every sequence in $\mathbb Z_{25}$ having length nine has a $U(25)^2$-weighted zero-sum subsequence whose terms are consecutive terms of the given sequence. So it follows that $C_{U(25)^2}\leq 9$. Mondal et al.\ \cite[Cor.\ 5]{SKS} showed that $C_{U(25)^2}\geq 9$. Hence, it follows that $C_{U(25)^2}=9$.
\end{proof}

\begin{theorem}\label{osub}
Let $n$ be an odd square. Then we have that $C_{S(n)}\leq 9$. 
\end{theorem}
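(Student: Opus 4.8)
The plan is to collapse the statement about $\Z_n$ to a statement about a single prime power, and then to the fixed weight set $U(p^2)^2$. Since $n$ is an odd square, every prime divisor $p$ of $n$ is odd and $r:=v_p(n)$ is even with $r\geq 2$. Fixing any such $p$ and writing $m=p^{r}$, Corollary \ref{pr} gives $C_{S(n)^*}\leq C_{S(m)^*}=C_{S(p^r)^*}$, and since $r\geq 2$ is even, Corollary \ref{u2s} gives $C_{S(p^r)^*}\leq C_{U(p^2)^2}$. Thus everything reduces to the single inequality $C_{U(p^2)^2}\leq 9$, which I would then establish for every odd prime $p$.

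I would dispose of the two smallest primes directly. For $p=3$ the bound is free: as $U(9)^2\su\Z_9$, Remark \ref{c} gives $C_{U(9)^2}\leq |\Z_9|=9$. For $p=5$ the bound is exactly Theorem \ref{25}, namely $C_{U(25)^2}\leq 9$. This leaves the tail $p\geq 7$, which I expect to be the main obstacle: here $|\Z_{p^2}|=p^2>9$ makes the trivial bound of Remark \ref{c} useless, and Theorem \ref{25} does not apply, so one must actually run a zero-sum argument in $\Z_{p^2}$.

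For $p\geq 7$ I would mimic the proof of Theorem \ref{25} on a length-$9$ sequence $S$ in $\Z_{p^2}$, replacing $5$ by $p$ throughout. After deleting any zero term (a length-one weighted zero-sum by itself), every term is a unit or $p$ times a unit; splitting $S$ into three consecutive blocks of three, one uses $C_{Q_p}=3$ (Theorem 5 of \cite{SKS}) together with the lifting argument of Lemma 5 of \cite{SKS} to promote a consecutive $Q_p$-weighted zero-sum of the divided-down terms to a $U(p^2)^2$-weighted zero-sum of consecutive terms, while the blocks containing units are handled by the covering statement of Lemma \ref{cm3}, which supplies the $p$-general analogue of the $\Z_{25}$-specific facts drawn from Lemma 2 of \cite{CM}. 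Each tool invoked ($C_{Q_p}=3$, the surjectivity $f_{p^2,p}(U(p^2)^2)=Q_p$ from Lemma 7 of \cite{SKS2}, and the covering of $\Z_{p^2}$) is available for all odd $p$, and since $|Q_p|=(p-1)/2\geq 3>2=|Q_5|$, every covering step and every ``enough units force a zero-sum'' step used for $\Z_{25}$ holds a fortiori for $\Z_{p^2}$; the delicacy of Theorem \ref{25} was precisely that $Q_5$ has only two elements, so $p\geq 7$ is if anything easier. Combining the three cases with the reduction of the first paragraph then yields $C_{S(n)^*}\leq 9$.
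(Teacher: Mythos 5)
Your proposal is correct and its skeleton coincides with the paper's proof: the paper also reduces via Corollaries \ref{pr} and \ref{u2s} to showing $C_{U(p^2)^2}\leq 9$ for every odd prime $p$, handles $p=3$ by Remark \ref{c} and $p=5$ by Theorem \ref{25}, exactly as you do. The one place you diverge is the tail $p\geq 7$: the paper simply cites Theorem 8 of \cite{SKS} for $C_{U(p^2)^2}\leq 9$, whereas you propose to re-derive it by adapting the argument of Theorem \ref{25}. Your re-derivation is viable and, as you observe, strictly easier than the $p=5$ case: with at least three unit terms the \emph{entire} length-nine sequence is a $U(p^2)^2$-weighted zero-sum sequence, and with at most two units a pigeonhole count gives three consecutive terms of the form $pu$, to which $C_{Q_p}=3$ and the lifting of Lemma 5 of \cite{SKS} apply; the delicate third case of Theorem \ref{25} disappears entirely. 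What this buys is a self-contained treatment of $p\geq 7$ at the cost of a page of argument the paper outsources. One correction, though: the covering statement you should invoke for the unit-rich case is Lemma \ref{cm} (equivalently Lemma \ref{cm'}), which is exactly the $p\geq 7$ tool giving all weights nonzero, and not Lemma \ref{cm3}. Lemma \ref{cm3} permits the weight $0$ on two of the three units, so it only produces a weighted zero-sum \emph{subsequence} whose support may fail to be consecutive; since the constant being bounded is $C$ and not $D$, that version is not sufficient, and the restriction $p\geq 7$ is precisely what licenses the stronger Lemma \ref{cm}.
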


\begin{proof}
Mondal et al.\ \cite[Cor.\ 6]{SKS} showed that $C_{U(p^2)^2}=9$ when $p$ is a prime which is at least seven. From Remark \ref{c} we see that $C_{U(9)^2}\leq 9$ and from Theorem \ref{25} we see that  $C_{U(25)^2}\leq 9$. Thus, from Corollaries \ref{pr} and \ref{u2s} it follows that $C_{S(n)}\leq 9$.
\end{proof}

Chintamani and Moriya \cite[Lem.\ 1]{CM} showed the next result.

\begin{lemma}\label{cm'}
Let $p$ be a prime which is at least seven and $A=U(p^r)^2$. Then for every $x_1,x_2,x_3\in U(p^r)$ we have that   $Ax_1+Ax_2+Ax_3=\mathbb Z_{p^r}$.
\end{lemma}

Mondal et al.\ \cite[Lem.\ 7]{SKS} showed the next result, which follows easily from Lemma \ref{cm'}.

\begin{lemma}\label{cm}
Let $p$ be a prime which is at least seven and $S=(x_1,\ldots,x_k)$ be a sequence in $\mathbb Z_{p^r}$. Suppose at least three terms of $S$ are units. Then $S$ is a $U(p^r)^2$-weighted zero-sum sequence.
\end{lemma}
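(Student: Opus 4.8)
The plan is to reduce the statement directly to Lemma \ref{cm'}, using the three units as the ``adjustable'' terms that we tune so as to cancel whatever the remaining terms contribute under an arbitrary admissible choice of weights.

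First I would fix three indices $i_1,i_2,i_3$ for which $x_{i_1},x_{i_2},x_{i_3}$ are units; the hypothesis guarantees such indices exist. For every remaining index $j$ I would assign the weight $a_j=1$, noting that $1=1^2\in U(p^r)^2$, so this is a legitimate choice. Let $c=\sum_{j}a_jx_j$ denote the resulting contribution of the non-distinguished terms, where the sum is empty (so $c=0$) in the extremal case $k=3$.

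Next I would invoke Lemma \ref{cm'} applied to the three units $x_{i_1},x_{i_2},x_{i_3}$. Since
$$U(p^r)^2\,x_{i_1}+U(p^r)^2\,x_{i_2}+U(p^r)^2\,x_{i_3}=\Z_{p^r}$$
and $-c\in\Z_{p^r}$, there exist $a_{i_1},a_{i_2},a_{i_3}\in U(p^r)^2$ with $a_{i_1}x_{i_1}+a_{i_2}x_{i_2}+a_{i_3}x_{i_3}=-c$. Combining the two pieces gives $\sum_{i=1}^k a_ix_i=c+(-c)=0$ with every weight $a_i\in U(p^r)^2$, which is exactly the assertion that $S$ is a $U(p^r)^2$-weighted zero-sum sequence.

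There is essentially no obstacle in this argument: the real content is carried entirely by Lemma \ref{cm'}, whose surjectivity conclusion is precisely strong enough to absorb an arbitrary target $-c$, so no case analysis on the remaining terms is required. The only point worth checking explicitly is that the filler weights on the non-unit terms lie in $U(p^r)^2$, which holds because $1$ is a square of a unit; this is what lets us extend the three carefully chosen weights to a full admissible weighting of $S$.
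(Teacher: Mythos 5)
Your proof is correct and is precisely the deduction the paper has in mind: the paper cites this as Lemma 7 of \cite{SKS} and states only that it ``follows easily from Lemma \ref{cm'}'', and your argument (weight the non-distinguished terms by $1\in U(p^r)^2$ and use the surjectivity in Lemma \ref{cm'} to hit $-c$ with the three units) is exactly that easy deduction.
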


\begin{theorem}\label{ube'}
Let $p$ be a prime which is at least seven and $r$ be an even number which is at least four. Then we have that $C_{S(p^r)}\leq 5$.
\end{theorem}

\begin{proof}
Let $S=(x_1,\ldots,x_5)$ be a sequence in $\mathbb Z_{p^r}$. As $r$ is even, we see that $p^{r-2}=(p^{(r-2)/2})^2\in S(p^r)$. If $p^2$ divides some term $x$ of $S$, then it follows that $p^{r-2}x=0$ and so $S$ has an $S(p^r)$-weighted zero-sum subsequence of length one. Thus, we may assume that $p^2$ does not divide any term of $S$. So every term of $S$ is either a unit or of the form $p\,u$ where $u$ is a unit.

If at least three terms of $S$ are units, by Lemma \ref{cm} we see that $S$ is an $S(p^r)$-weighted zero-sum sequence. Thus, we may assume that at most two terms of $S$ are units. Then at least three terms of $S$ are of the form $p\,u$ where $u$ is a unit. We may assume that $x_1=pu_1,~x_2=pu_2,~x_3=pu_3$ where $u_1,u_2,u_3\in U(p^r)$. 

Consider the sequence $T=(u_1,u_2,u_3,px_4,px_5)$. By Lemma \ref{cm} we see that $T$ is a $U(p^r)^2$-weighted zero-sum sequence. So there exist $a_i$'s in $U(p^r)^2$ such that $a_1u_1+a_2u_2+a_3u_3+pa_4x_4+pa_5x_5=0$. Thus, it follows that  $a_1x_1+a_2x_2+a_3x_3+p^2a_4x_4+p^2a_5x_5=0$. As $r$ is at least four, we see that $p^2\neq 0$. Hence, it follows that $S$ is an $S(p^r)$-weighted zero-sum sequence. 
\end{proof}

\begin{corollary}\label{ube}
Let $n$ be an odd square which is divisible by $p^4$ where $p$ is a prime which is at least seven. Then we have that $C_{S(n)}=5$. 
\end{corollary}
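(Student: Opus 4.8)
The plan is to sandwich $C_{S(n)^*}$ between $5$ from below and $5$ from above, assembling results already in hand rather than proving anything new from scratch.

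First I would establish the lower bound. Since $n$ is an odd square, the corollary following Theorem \ref{lbd} gives $D_{S(n)^*}=5$. By Remark \ref{c} we have $D_A(n)\leq C_A(n)$ for every $A\su\Z_n$, so applying this with $A=S(n)^*$ yields $C_{S(n)^*}\geq D_{S(n)^*}=5$.

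Next I would establish the upper bound. Put $r=v_p(n)$ and $m=p^r$. Because $p^4\mid n$ we have $r\geq 4$, and because $n$ is a square, $r$ is even; hence $r\geq 4$ is an even number. With $p\geq 7$ this is exactly the hypothesis of Theorem \ref{ube'}, which gives $C_{S(p^r)^*}\leq 5$. Corollary \ref{pr}, applied to the prime divisor $p$ of $n$ with $m=p^{v_p(n)}$, then transports this bound to $n$: $C_{S(n)^*}\leq C_{S(m)^*}=C_{S(p^r)^*}\leq 5$. Combining the two bounds gives $C_{S(n)^*}=5$.

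The only point that needs care is checking that the hypotheses on $r$ match those of Theorem \ref{ube'} precisely: $n$ being an odd square forces $v_p(n)$ to be even, while divisibility by $p^4$ forces it to be at least $4$, so both conditions $r\geq 4$ and $r$ even are met simultaneously. There is no genuine obstacle in this argument; it is purely an assembly of the odd-square lower bound for $D_{S(n)^*}$ with the prime-power upper bound of Theorem \ref{ube'}, carried over to $n$ through the reduction in Corollary \ref{pr}.
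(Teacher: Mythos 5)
Your proposal is correct and follows essentially the same route as the paper: the upper bound comes from Theorem \ref{ube'} transported via Corollary \ref{pr}, and the lower bound from the odd-square bound $D_{S(n)^*}\geq 5$ (Theorem \ref{lbe}) together with $D_A(n)\leq C_A(n)$. The only cosmetic difference is that you cite the corollary $D_{S(n)^*}=5$ where the paper cites Theorem \ref{lbe} directly; the content is the same.
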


\begin{proof}
As $n$ is a square, it follows that $v_p(n)$ is even. So from Corollary \ref{pr} and Theorem \ref{ube'} we have $C_{S(n)}\leq 5$. As we have that $D_A(n)\leq C_A(n)$ for every $A\subseteq\mathbb Z_n$, from Theorem \ref{lbe} it follows that $C_{S(n)}=5$.
\end{proof}
 
The next result follows easily from a result by Chintamani and Moriya \cite[Lem.\ 2]{CM}.

\begin{lemma}\label{cm5}
Let $r$ be a positive integer and $S$ be a sequence in $\mathbb Z_{5^r}$. Suppose at least four terms of $S$ are units. Then $S$ is a $U(5^r)^2$-weighted zero-sum sequence.
\end{lemma}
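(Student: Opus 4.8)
The plan is to treat Lemma \ref{cm5} as the $p=5$ analogue of Lemma \ref{cm}, and to derive it from Lemma 2 of \cite{CM} in exactly the way that Lemma \ref{cm} is derived from Lemma \ref{cm'}. Lemma 1 of \cite{CM} (quoted here as Lemma \ref{cm'}) asserts that for $p\geq 7$ the threefold sumset $U(p^r)^2\,x_1+U(p^r)^2\,x_2+U(p^r)^2\,x_3$ fills all of $\Z_{p^r}$ whenever $x_1,x_2,x_3$ are units; the corresponding statement fails for $p=5$, and Lemma 2 of \cite{CM} is the repaired version requiring one extra unit, namely that $U(5^r)^2\,y_1+U(5^r)^2\,y_2+U(5^r)^2\,y_3+U(5^r)^2\,y_4=\Z_{5^r}$ for any units $y_1,y_2,y_3,y_4\in U(5^r)$. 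So the whole content of the lemma is already packaged in this sumset identity, and what remains is the routine passage from ``every element is a fourfold square-weighted combination of four units'' to ``every such sequence is a weighted zero-sum''.

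To carry this out, I would write $S=(x_1,\ldots,x_k)$ and, using the hypothesis, pick four indices whose terms are units; since the weighted zero-sum condition uses all terms and is insensitive to their order, there is no loss in assuming these are $x_1,x_2,x_3,x_4$. For every remaining index $i\in[5,k]$ I would assign the trivial weight $a_i=1$, which lies in $U(5^r)^2$ because $1=1^2$. Setting $c=-\sum_{i=5}^{k} x_i\in\Z_{5^r}$ (an empty sum, hence $c=0$, in the edge case $k=4$), Lemma 2 of \cite{CM} applied to the units $x_1,x_2,x_3,x_4$ gives $c\in U(5^r)^2\,x_1+U(5^r)^2\,x_2+U(5^r)^2\,x_3+U(5^r)^2\,x_4$, so there exist $a_1,a_2,a_3,a_4\in U(5^r)^2$ with $a_1x_1+a_2x_2+a_3x_3+a_4x_4=c$. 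Adding back the trivially weighted tail yields $\sum_{i=1}^{k} a_ix_i=0$ with every $a_i\in U(5^r)^2$, which is exactly the assertion that $S$ is a $U(5^r)^2$-weighted zero-sum sequence.

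I do not expect a genuine obstacle here: the depth is entirely in Lemma 2 of \cite{CM}, whose proof handles the quadratic-residue structure of $\Z_{5^r}$ and explains why four units rather than three are needed. The only thing to be careful about is the bookkeeping, namely confirming that $1$ counts as an allowed weight, that the reordering is harmless because consecutiveness is not demanded in this lemma (unlike in the $C$-type results), and that the $k=4$ boundary case is covered by taking $0\in\Z_{5^r}$ as a member of the fourfold sumset. With those checks in place the argument is a direct transcription of the proof of Lemma \ref{cm}.
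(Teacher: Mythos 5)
Your proof is correct and is exactly the intended argument: the paper gives no proof of this lemma, stating only that it ``follows easily from Lemma 2 of \cite{CM}'', and your write-up (pick four unit terms, give every other term the weight $1=1^2$, and use the fourfold sumset identity to hit the negative of the remaining sum) is the natural expansion of that remark, mirroring how Lemma \ref{cm} is obtained from Lemma \ref{cm'}. No issues.
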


\begin{theorem}\label{ub7}
We have that $C_{S(5^r)}\leq 7$ when $r$ is an even number which is at least four.
\end{theorem}

\begin{proof}
We use a similar argument as in the proof of Theorem \ref{ube'}. The only change is that we replace Lemma \ref{cm} with Lemma \ref{cm5}.
\end{proof}

\begin{corollary}
Let $n$ be a square which is divisible by $5^4$. Then we have that $C_{S(n)}\leq 7$.
\end{corollary}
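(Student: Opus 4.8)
The plan is to reduce this global statement to the local computation already carried out at the prime $5$, exactly mirroring the proof of Corollary \ref{ube} for primes $p \geq 7$. The only inputs needed are Theorem \ref{ub7}, which bounds $C_{S(5^r)^*}$ for even $r \geq 4$, and Corollary \ref{pr}, which lets us pass from $\Z_n$ to the local factor at a single prime.

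First I would record the two arithmetic facts about the exponent of $5$ in $n$. Since $n$ is a square, every prime in its factorization occurs to an even power, so $v_5(n)$ is even. Since $5^4$ divides $n$, we have $v_5(n) \geq 4$. Writing $m = 5^{\,v_5(n)}$, these two observations together say precisely that $m = 5^r$ for some even $r \geq 4$, which is exactly the hypothesis of Theorem \ref{ub7}.

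Next I would invoke Corollary \ref{pr} with $p = 5$: taking $m = 5^{\,v_5(n)}$ gives $C_{S(n)^*} \leq C_{S(m)^*}$. Combining this with Theorem \ref{ub7}, which yields $C_{S(m)^*} = C_{S(5^r)^*} \leq 7$ because $r \geq 4$ is even, I obtain $C_{S(n)^*} \leq 7$, as desired.

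There is essentially no obstacle in the corollary itself: all the genuine work sits in Theorem \ref{ub7} (whose proof adapts Theorem \ref{ube'} by substituting Lemma \ref{cm5} for Lemma \ref{cm}, the point being that at the prime $5$ one needs four units rather than three to guarantee a $U(5^r)^2$-weighted zero-sum, which is what pushes the bound from $5$ up to $7$) and in Corollary \ref{pr} (which rests on Lemma \ref{crt}). The one thing to be careful about is confirming that the hypotheses of Theorem \ref{ub7} are met, namely that $r = v_5(n)$ is both even and at least $4$; this is exactly why the statement assumes $n$ is a square \emph{and} that $5^4 \mid n$, rather than merely $5^2 \mid n$. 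Unlike Corollary \ref{ube}, I would not expect this argument to pin down $C_{S(n)^*}$ exactly, since the lower bound of $5$ coming from Theorem \ref{lbe} does not match the upper bound of $7$, so the conclusion is stated only as the inequality $C_{S(n)^*} \leq 7$.
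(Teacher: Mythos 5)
Your argument is correct and is exactly the paper's proof: note that $v_5(n)$ is even because $n$ is a square and $v_5(n)\geq 4$ because $5^4\mid n$, then apply Theorem \ref{ub7} to $5^{v_5(n)}$ and pass back to $\Z_n$ via Corollary \ref{pr}. Nothing further is needed.
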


\begin{proof}
As $n$ is a square, it follows that $v_5(n)$ is even. Also, we have that $v_5(n)\geq 4$. So from Corollary \ref{pr} and Theorem \ref{ub7} we get that $C_{S(n)}\leq 7$.
\end{proof}

\begin{theorem}\label{ubos2}
Let $n$ be a square of an odd squarefree number. Then we have that $C_{S(n)}=9$. 
\end{theorem}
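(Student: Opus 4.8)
The plan is to exhibit a sequence of length $8$ in $\Z_n$ that has no $S(n)^*$-weighted zero-sum subsequence of consecutive terms; together with Theorem \ref{osub} this gives $C_{S(n)^*}=9$, and in particular $C_{S(n)^*}\geq 9$. Since $n=m^2$ with $m$ odd and squarefree, the radical of $n$ is $m$ itself, and for every prime divisor $p$ of $n$ we have $v_p(n)=2$ and $f_{n,p}(m)=0$. By Proposition \ref{seq} I would first fix a sequence $(u,v)$ in $U(n)$ such that for every prime divisor $p$ of $n$ the pair $\big(f_{n,p}(u),f_{n,p}(v)\big)$ has no $Q_p$-weighted zero-sum subsequence, and then take
$$S=(mu,\,mv,\,u,\,mu,\,mv,\,v,\,mu,\,mv).$$
The two unit terms $u,v$ occupy positions $3$ and $6$, splitting the remaining six terms — each a multiple of the radical $m$, hence a multiple of $p$ \emph{simultaneously} for every prime $p\mid n$ — into the three blocks $\{1,2\}$, $\{4,5\}$, $\{7,8\}$. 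This mirrors the single-prime extremal configuration of two units and three runs of two $p$-multiples; the reason for using $m$ rather than a single prime power is that the unit/non-unit pattern of $S$ then becomes the same modulo every prime divisor of $n$.

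The core step is a reduction modulo each prime. Suppose, for contradiction, that $T$ is an $S(n)^*$-weighted zero-sum subsequence of consecutive terms, say $\sum_{i\in I}a_ix_i=0$ with $a_i\in S(n)^*$ and $I$ the (consecutive) index set of $T$. Applying the natural map $f_{n,q}$ for a prime divisor $q$ of $n$, I would use that $a_i^{(q)}\in U(q^2)^2\cup\{0\}$ (by Observation \ref{crt'}, since $S(q^2)^*=U(q^2)^2$) so that $f_{n,q}(a_i)\in Q_q\cup\{0\}$, while $f_{n,q}(mu)=f_{n,q}(mv)=0$. All $m$-multiple terms therefore vanish, leaving the relation $f_{n,q}(a_3)f_{n,q}(u)+f_{n,q}(a_6)f_{n,q}(v)=0$ in $\Z_q$, where the $a_3$-summand appears precisely when $3\in I$ and the $a_6$-summand precisely when $6\in I$. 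As $\big(f_{n,q}(u),f_{n,q}(v)\big)$ has no $Q_q$-weighted zero-sum subsequence and both $f_{n,q}(u),f_{n,q}(v)$ are nonzero, this forces $f_{n,q}(a_3)=0$ whenever $3\in I$ and $f_{n,q}(a_6)=0$ whenever $6\in I$. Since $q$ was arbitrary and a nonzero element of $S(n)^*$ has nonzero image under some $f_{n,q}$, I conclude $3\notin I$ and $6\notin I$: no zero-sum window may contain either unit term.

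Because $I$ is then an interval avoiding $3$ and $6$, it lies inside one of the blocks $\{1,2\}$, $\{4,5\}$, $\{7,8\}$ — this is exactly where consecutiveness enters, and it is what makes the bound $9$ possible rather than $D_{S(n)^*}=5$. For such a window every term is a multiple of $m$, so I would rewrite the relation as $m\big(\sum_{i\in I}a_i x_i'\big)=0$ with $x_i'\in\{u,v\}$, use that $mz=0$ in $\Z_{m^2}$ is equivalent to $f_{n,m}(z)=0$, and project to each $\Z_q$; invoking Proposition \ref{seq} once more forces every $f_{n,q}(a_i)=0$, hence $a_i=0$, contradicting $a_i\in S(n)^*$. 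The main obstacle is the multi-prime bookkeeping: a window zero-sum need not use the same terms in each coordinate, so a coordinatewise reduction can in principle drop terms and manufacture a relation that is invisible to a single prime. Choosing the non-unit terms to be multiples of the radical $m$ is precisely what forces them to vanish modulo every prime at once, keeping the active support uniform across all primes and collapsing the entire problem onto the single bad-pair condition guaranteed by Proposition \ref{seq}.
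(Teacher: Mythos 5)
Your proposal is correct and takes essentially the same route as the paper: the same eight-term configuration (the paper builds $x,y$ by CRT with $x^{(p)}=p\,u^{(p)}$ where you take $mu,mv$, which agree up to a unit at each prime), the same reduction of coefficients into $Q_q\cup\{0\}$ via $S(q^2)^*=U(q^2)^2$, and the same two-step case analysis — first excluding windows that contain $u$ or $v$, then killing the two-term blocks with Proposition \ref{seq}. The only cosmetic difference is that you rule out positions $3$ and $6$ by showing $f_{n,q}(a_3)=f_{n,q}(a_6)=0$ for every prime $q$, whereas the paper argues at a single prime where the relevant coefficient survives; the two are logically equivalent.
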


\begin{proof}
By Theorem \ref{osub} we get that $C_{S(n)}\leq 9$. We will construct a sequence $S$ of length eight in $\mathbb Z_n$ which has no $S(n)$-weighted zero-sum subsequence having consecutive terms. Hence, it will follow that $C_{S(n)}=9$.

By Proposition \ref{seq} there exists a sequence $S'=(u,v)$ in $U(n)$ such that for every prime divisor $p$ of $n$, the image $(u_p,v_p)$ of $S'$ under $f_{n,\,p}$ does not have any $Q_p$-weighted zero-sum subsequence. By the Chinese remainder theorem there exist $x,y\in \mathbb Z_n$ such that for each prime divisor $p$ of $n$ we have that $x^{(p)}=p\,u^{(p)}$ and $y^{(p)}=p\,v^{(p)}$. In this proof, for every $c\in \mathbb Z_n$ we will denote $f_{n,\,p}(c)$ by $c_p$. So it follows that $x_p=y_p=0$. Consider the sequence $S$ in $\mathbb Z_n$ defined as follows: \[S=(x,y,u,x,y,v,x,y).\]
Suppose there exists a subsequence $T$  of $S$ having consecutive terms which is an $S(n)$-weighted zero-sum sequence. 

\noindent
\texttt{Case 1:} 
Either $u$ or $v$ is a term of $T$.

Without loss of generality, we may assume that $u$ is a term of $T$.

Let $a\in S(n)$ be the coefficient of $u$ in the $S(n)$-weighted zero-sum which is obtained from $T$. As $a\neq 0$, there exists a prime divisor $p$ of $n$ such that $a^{(p)}\neq 0$. As $n$ is the square of a squarefree number, it follows that $v_p(n)=2$. So we see that $a^{(p)}\in S(p^2)$. As every non-zero term of $\mathbb Z_{p^2}$ is either a unit or a unit multiple of $p$, we see that $S(p^2)=U(p^2)^2$. As $a^{(p)}\in U(p^2)^2$, it follows that $f_{p^2,\,p}\big(a^{(p)}\big)\in Q_p$ and so $a_p\in Q_p$.

We claim that the sequence $(u_p,v_p)$ has a $Q_p$-weighted zero-sum subsequence.  
Suppose $v$ is a term of $T$. Let $b\in S(n)$ be the coefficient of $v$ in the $S(n)$-weighted zero-sum which is obtained from $T$. As $b\in S(n)$, it follows that $b_p\in Q_p\cup\{0\}$. So we get that $a_pu_p+b_pv_p=0$. If $v$ is not a term of $T$ then $a_pu_p=0$. This proves our claim from which we get a contradiction. 

\noindent
\texttt{Case 2:} 
Neither $u$ nor $v$ is a term of $T$.

As $T$ is a subsequence of consecutive terms, it follows that $T$ is a subsequence of the sequence $(x,y)$. Suppose $x$ is a term of $T$. Let $a\in S(n)$ be the coefficient of $x$ in the $S(n)$-weighted zero-sum which is obtained from $T$. By a similar argument as in the previous case, we see that there is a prime divisor $p$ of $n$ such that $a_p\in Q_p$. We claim that the sequence $(u_p,v_p)$ has a $Q_p$-weighted zero-sum subsequence. 

Suppose $y$ is a term of $T$. Then there exists $b\in S(n)$ such that $ax+by=0$. As $b\in S(n)$, it follows that $b_p\in Q_p\cup\{0\}$. As $ax+by=0$, it follows that $a^{(p)}x^{(p)}+b^{(p)}y^{(p)}=0$ in $\mathbb Z_{p^2}$. Thus, we get that $p\,\big(a^{(p)}u^{(p)}+b^{(p)}v^{(p)}\big)=0$ and so $a^{(p)}u^{(p)}+b^{(p)}v^{(p)}$ is divisible by $p$. Hence, it follows that $a_pu_p+b_pv_p=0$. By a similar argument, we see that if $y$ is not a term of $T$ then $a_pu_p=0$. This proves our claim from which we get a contradiction.

So we see that the sequence $S$ does not have any $S(n)$-weighted zero-sum subsequence having consecutive terms. 
\end{proof}

\section{Concluding remarks}

We have been unable to determine the constants $C_{S(3^r)}$ and $C_{S(5^r)}$ where $r$ is an even number which is at least four. For every such $r$ we have shown that $C_{S(3^r)}\in [5,9]$ and $C_{S(5^r)}\in [5,7]$. If the values of these constants are known, we can determine the value of $C_{S(n)}$ for every $n$.

We can try to characterize the sequences in $\mathbb Z_n$ of length $C_{S(n)}-1$ which do not have any $S(n)$-weighted zero-sum subsequence having consecutive terms. We can also try to characterize sequences in $\mathbb Z_n$ of length $D_{S(n)}-1$ which do not have any $S(n)$-weighted zero-sum subsequence.

\section{Acknowledgments}

\bigskip
\hrule
\bigskip

\noindent 2020 {\it Mathematics Subject Classification}: Primary 11B50; Secondary 11B75.

\noindent \emph{Keywords:} Davenport constant, weighted zero-sum sequence.

\bigskip
\hrule
\bigskip

%
%

\end{document}